\documentclass[11pt]{article}
\usepackage[utf8]{inputenc}
\usepackage{comment}
\usepackage{amssymb,amsmath,amsthm,graphicx,xcolor,mathtools,enumerate,mathrsfs}
\usepackage{enumitem,thmtools}
\usepackage{thm-restate}
\usepackage{cite}
\usepackage{booktabs}
\usepackage{comment}
\usepackage{float}
\usepackage{subcaption}
\usepackage[backref=section]{hyperref}
\usepackage[export]{adjustbox}
\definecolor{mycitegreen}{HTML}{00A99D}
\hypersetup{
  colorlinks=true,
  linkcolor={blue!85!black},   
  citecolor={red!85!black},   
  urlcolor=mycitegreen    
}
\usepackage[noabbrev]{cleveref}
 \usepackage[margin=2.54cm]{geometry}
\usepackage{amsthm}
\usepackage{placeins}
\usepackage{sectsty}
\usepackage{array}
\usepackage{multirow}
\sectionfont{\centering}
\crefrangelabelformat{lemma}{#3#1#4-#5#2#6}

\usepackage{tikz}
\usetikzlibrary{math,patterns}
\usetikzlibrary{snakes,calc,decorations.pathmorphing,through}
\tikzset{snake it/.style={decorate, decoration=snake}}

\theoremstyle{plain}
\newtheorem{theorem}{Theorem}[section]
\crefname{theorem}{Theorem}{Theorems}

\newtheorem{proposition}[theorem]{Proposition}
\crefname{proposition}{Proposition}{Propositions}

\newtheorem{corollary}[theorem]{Corollary}
\crefname{corollary}{Corollary}{Corollaries}

\newtheorem{lemma}[theorem]{Lemma}
\crefname{lemma}{Lemma}{Lemmas}

\crefname{conjecture}{Conjecture}{Conjectures}

\newtheorem{problem}[theorem]{Problem}
\crefname{problem}{Problem}{Problem}

\newtheorem{claim}[theorem]{Claim}
\crefname{claim}{Claim}{Claims}

\crefname{observation}{Observation}{Observations}

\crefname{setup}{Setup}{Setups}

\crefname{fact}{Fact}{Facts}

\crefname{remark}{Remark}{Remarks}

\crefname{example}{Example}{Examples}

\theoremstyle{definition}
\newtheorem{definition}[theorem]{Definition}
\crefname{definition}{Definition}{Definitions}

\crefname{construction}{Construction}{Constructions}

\crefname{question}{Question}{Questions}

\crefname{section}{Section}{Sections}
\Crefname{section}{Section}{Sections}

\crefname{subsection}{Subsection}{Subsections}
\Crefname{subsection}{Subsection}{Subsections}

\crefname{subsubsection}{Subsubsection}{Subsubsections}
\Crefname{subsubsection}{Subsubsection}{Subsubsections}

\crefname{appendix}{Appendix}{Appendices}
\Crefname{appendix}{Appendix}{Appendices}

\numberwithin{equation}{section}

\renewcommand{\int}[1]{\mathop{\mkern 0mu\mathrm{int}}\nolimits(#1)}

\newcommand{\keywords}[1]{\par\noindent\textbf{Keywords: }#1\par}
\newcommand{\subjclass}[1]{\par\noindent\textbf{Mathematics Subject Classifications: }#1\par}
\allowdisplaybreaks
\usepackage{listings}
\lstdefinestyle{Python}{
    language=Python,
}

\definecolor{DarkDesaturatedBlue}{HTML}{3A3556}
\definecolor{VividOrange}{HTML}{F15918}
\definecolor{PureOrange}{HTML}{FFBA00}
\definecolor{LightGrayishPink}{HTML}{EEC5D5}
\definecolor{VerySoftBlue}{HTML}{B5AFDB}

\usepackage{tikz}
\usetikzlibrary{math}
\usetikzlibrary{calc,decorations.pathmorphing,through}
\tikzset{snake it/.style={decorate, decoration=snake}}

\definecolor{DarkDesaturatedBlue}{HTML}{3A3556}
\definecolor{VividOrange}{HTML}{F15918}
\definecolor{PureOrange}{HTML}{FFBA00}
\definecolor{LightGrayishPink}{HTML}{EEC5D5}
\definecolor{VerySoftBlue}{HTML}{B5AFDB}
  \makeatletter
  \newcommand{\labelinthm}[1]{%
     \label{temp#1}
     \protected@write \@auxout {}{\string \newlabel{#1}{{\emph{\ref{temp#1}}}{\thepage}{\emph{\ref{temp#1}}}{temp#1}{}} }%
  }
  \makeatother

\title{Oriented Discrepancy of The Square of Hamilton Cycles}
\author{Yufei Chang\thanks{School of Mathematics, Shandong University, Jinan 250100, China. Supported by National Natural Science Foundation of China (No.12571373).}
\and Yangyang Cheng\thanks{Faculty of Computer Science and Mathematics, University of Passau, Passau, Germany. Partially supported by the Deutsche Forschungsgemeinschaft (DFG, German Research Foundation)-542321564.}
\and Zhilan Wang\footnotemark[1] \thanks{Corresponding author. Email: \href{mailto:zhilanwang@mail.sdu.edu.cn}{\texttt{zhilanwang@mail.sdu.edu.cn.}}}
\and Shuo Wei\footnotemark[1]
\and Jin Yan\footnotemark[1]}
\date{}

\begin{document}

\maketitle

\begin{abstract}
 For an oriented graph $G$, the oriented discrepancy problem concerns the existence of a spanning subgraph of $G$ with a large imbalance between its forward and backward edge orientations. Freschi and Lo proved the Dirac-type Hamilton cycle result in oriented graphs, and asked for an analogue for powers of Hamilton cycles under a minimum-degree condition. We show that, for sufficiently large $n$, every oriented graph $G$ on $n$ vertices with minimum degree $\delta(G) \ge 2n/3$ contains the square of a Hamilton cycle $H$ with $\sigma_{\max}(H)$ guaranteed to exceed a function depending on $\delta(G)$ and $n$. 
 

\vspace{1em}
\noindent
\keywords{Oriented graphs; discrepancy; minimum degree; the square of Hamilton cycles}

\vspace{1em}
\noindent
\subjclass{05C07, 05C20, 05C35}
\end{abstract}

\section{Introduction}\label{sec:intro}

The study of spanning structures in dense graphs is a cornerstone of extremal graph theory. A classical theorem of Dirac \cite{Dirac} asserts that every graph $G$ on $n\ge 3$ vertices with minimum degree $\delta(G)$ at least $n/2$ contains a Hamilton cycle. A central object of interest in this domain is the $k$th power of a Hamilton cycle. For a graph $G$, the $k$th power of $G$ is the graph on the same vertex set in which two vertices are adjacent if and only if their distance in $G$ is at most $k$. The foundational threshold for the existence of $k$th power of Hamilton cycles is given by the celebrated P\'osa-Seymour conjecture, which was proved for large $n$ by Koml\'{o}s, S\'{a}rk\"{o}zy, and Szemer\'{e}di \cite{Komlos3}. 

\begin{theorem}[{[\citen{Komlos3}]}]\label{posa}
For every positive integer $k$, there exists $n_0\in \mathbb{N}$ such that every graph $G$ on $n\ge n_0$ vertices satisfying $\delta(G)\ge kn/(k+1)$ contains the $k$th power of a Hamilton cycle.
\end{theorem}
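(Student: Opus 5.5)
The plan is to prove \cref{posa} with the absorbing method combined with Szemer\'edi's Regularity Lemma and the Blow-up Lemma. This follows the original Koml\'os--S\'ark\"ozy--Szemer\'edi strategy, with an absorbing step in place of their more delicate connecting arguments. Fix $k$ and constants $1/n_0 \ll \varepsilon \ll d \ll \gamma \ll 1/k$. The argument has four stages: a \emph{connecting lemma}, an \emph{absorbing lemma}, a \emph{reservoir}, and an \emph{almost-cover lemma}.

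For the connecting lemma, call an ordered $k$-tuple of vertices spanning a clique a \emph{$k$-end}. We show that any two disjoint $k$-ends can be joined by the $k$th power of a path of bounded length, say at most $L=L(k)$, avoiding any prescribed set of at most $\gamma n$ vertices. The tool is $\delta(G)\ge kn/(k+1)$: any $j\le k$ vertices have at least $n-j n/(k+1)\ge n/(k+1)$ common neighbours. This lets us extend a $k$-end greedily, one vertex at a time, into many cliques $K_{k+1}$. We then use a counting/connectivity argument in the auxiliary graph whose vertices are $k$-cliques, adjacent when they lie in a common $K_{k+1}$. This graph must be shown to be connected, with bounded diameter and many paths, so that connections still exist robustly after deleting $\gamma n$ vertices.

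The absorbing lemma works as follows. For each vertex $v$, an \emph{absorber} is a short $k$th power of a path $P$ with the same $k$-ends, such that $V(P)\cup\{v\}$ also spans a $k$th power of a path with those $k$-ends. A convenient choice is a path $x_1\dots x_{2k}$ with $v$ adjacent to $x_1,\dots,x_{2k}$ and $x_k x_{k+1}$ removable. The degree condition gives $\Omega(n^{2k})$ absorbers for each $v$. Selecting each candidate tuple randomly with probability $\Theta(n^{1-2k})$ and deleting overlaps gives a family $\mathcal{F}$ of $O(\gamma n)$ disjoint absorbers with at least $\gamma^2 n$ absorbers for every $v$. We join these into a single absorbing path $A$ using the connecting lemma. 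We also set aside a small random reservoir $R$, so that later connections can be routed through $R$ and the degree conditions survive inside it.

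For the almost-cover lemma, apply the Regularity Lemma to $G-V(A)-R$. The reduced graph $\Gamma$ inherits minimum degree at least $(k/(k+1)-2d)|\Gamma|$. Applying the Hajnal--Szemer\'edi theorem (or a fractional variant) to $\Gamma$ yields a nearly spanning $K_{k+1}$-factor of clusters. Each $(k+1)$-tuple of clusters can be made super-regular by discarding few vertices, and the Blow-up Lemma then embeds into it the $k$th power of a path covering almost all of its vertices. Finally we chain these pieces and $A$ into one $k$th power of a cycle by connecting through $R$. Leftover vertices, at most $\gamma^2 n$ in total including unused reservoir vertices, are absorbed one at a time by $\mathcal{F}$.

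I expect the main obstacle to be the connecting lemma at the exact threshold $kn/(k+1)$. Cliques may have common neighbourhoods of size only about $n/(k+1)$, and near-extremal graphs, such as near-complete $(k+1)$-partite graphs with slightly unbalanced parts, can make the clique-graph nearly disconnected or make the cluster factor fail to fit. My first approach would be a stability split. In the non-extremal case, the argument above goes through with slack. In the extremal case, where $G$ is $\gamma$-close to the complete balanced $(k+1)$-partite graph, we construct the $k$th power directly: we cycle through the parts in order, and use the minimum degree to fix the few exceptional vertices and balance the part sizes.
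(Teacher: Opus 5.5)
The paper does not prove this statement. It is quoted from Koml\'os--S\'ark\"ozy--Szemer\'edi and used only as a black box for $\delta(G)\le(2/3+4\alpha/3)n$. Your outline resembles the Levitt--S\'ark\"ozy--Szemer\'edi absorbing proof for $k=2$, which the paper borrows in Section~\ref{sec:cac}. There is a genuine gap: your extremal/non-extremal split is too narrow, and both your absorbing lemma and your connecting lemma fail in what you call the non-extremal case.

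\textbf{The absorbing lemma.} Your absorber asks for $v$ to be complete to a $k$th power of a path on $2k$ vertices. Since $x_1,\dots,x_{k+1}$ span a $K_{k+1}$, every absorber contains a $K_{k+2}$ through $v$. So ``the degree condition gives $\Omega(n^{2k})$ absorbers for each $v$'' is false: the balanced complete $(k+1)$-partite graph has none. Excluding graphs close to that one does not help. Let $n=(k+1)m$. Take independent sets $P_1,\dots,P_k$ of size $m$, a clique $X$ of size $m-1$, and a vertex $v$. Join all pairs between distinct $P_i$, join each $P_i$ completely to $X\cup\{v\}$, and put no edges between $v$ and $X$. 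Then $\delta(G)=km=kn/(k+1)$. Also $G$ contains a clique of order $m+k-1$, so it is $\Omega(n^2)$ edges away from every $(k+1)$-partite graph. Yet $N(v)=P_1\cup\dots\cup P_k$ is $K_{k+1}$-free, so $v$ has no absorber at all.

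\textbf{The connecting lemma.} It breaks in the same regime. For $k=2$, let $A,B$ be cliques of size $n/3-1$ with no $A$--$B$ edges. Join the remaining $n/3+2$ vertices $C$ completely to $A\cup B$, and let $G[C]$ be $K_{2,n/3}$ with centres $z_1,z_2$. Then $\delta(G)=2n/3$, and $G$ is far from tripartite. In any square path starting at an edge of $A$, consider the first vertex of $B$. The two vertices just before it are adjacent to it and to each other, so they form an edge inside $C$, and one of them is $z_1$ or $z_2$. Hence your connecting lemma as stated fails once $\{z_1,z_2\}$ is forbidden. In your scheme, almost-cover pieces on the $A$-side can be joined to pieces on the $B$-side only if $z_1$ and $z_2$ both happen to land in the random reservoir and remain unused.

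\textbf{What is needed instead.} Both examples are extremal in the sense of Definition~\ref{a-ex} with $n/3$ replaced by $n/(k+1)$: there are sets $A,B$, not necessarily disjoint, with $d(A,B)<\alpha$. In the first example take $A=B=P_1$. With that broader notion, one can rescue your absorber. Apply removal plus Erd\H{o}s--Simonovits stability to $G[N(v)]$, whose minimum degree is at least $(1-1/k)|N(v)|$; this gives $\Omega(n^{2k})$ absorbers per vertex when $G$ is non-extremal. But your extremal case must then handle both almost-independent sets and almost-disconnected pairs, exactly at the threshold, together with divisibility and exceptional vertices. That is where most of the work lies, and your one-sentence treatment covers only the near-$(k+1)$-partite configuration.
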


The corresponding problems in directed settings have also received considerable attention. For tournaments, Dragani\'c et al. \cite{Draganic1} obtained tight bounds, up to the relevant error term, for forcing powers of Hamilton cycles under minimum semi-degree conditions. More recently, DeBiasio et al. \cite{Debiasio}  studied powers of Hamilton cycles in oriented and digraphs. In particular, they showed that they showed that every digraph $D$ with minimum total degree $\delta(G)\geq(8/5+\varepsilon)n$ contains a square of Hamilton cycle and  a semi-degree condition $\delta^0(D)\ge (1/2-1/10^{6000k})n$ suffices to guarantee the $k$th power of a Hamilton cycle, provided that $D$ is sufficiently large. These results provide the natural
directed analogue of the P\'osa--Seymour problem. 

Beyond the mere existence of spanning structures, a rich line of inquiry asks whether one can find such subgraphs exhibiting a large bias in a two-edge-coloured graph. This kind of problem, known as discrepancy problems, traces back to Erd\H{o}s and Spencer \cite{Erdos1963,ErdosSpencer1972}. Discrepancy problems have since been resolved for a variety of structures, including spanning trees, graph factors, and Hamilton cycles \cite{Bal,BaloghC,Bradac2,Fre,Gishboliner,Gis,Gis1}. More recently, Brada\v{c} \cite{Bradac1} extended this to powers of Hamilton cycles, showing that for $k\ge 3$, a minimum degree of $(1-1/(k+1)+\eta)n$ guarantees the existence of the $k$th power of a Hamilton cycle with linear discrepancy.

An intriguing oriented analogue of graph discrepancy was introduced by Gishboliner, Krivelevich, and Michaeli \cite{Gis2}. In this setting, the role of the two colours is replaced by the two possible directions of each edge in a tournament or oriented graph. They conjectured that every oriented graph $G$ with minimum degree at least $n/2$ contains a Hamilton cycle with at least $\delta(G)$ edges in one direction. This conjecture was later confirmed by Freschi and Lo \cite{Freschi}. To formalize this, let $C = v_1 \ldots v_\ell v_1$ be a cycle in an oriented graph. We define the forward and backward edge counts as $\sigma^+(C):=\bigl|\{1\le i\le \ell:\; v_iv_{i+1}\in E(G)\}\bigr|$ and $\sigma^-(C):=\bigl|\{1\le i\le \ell:\; v_{i+1}v_i\in E(G)\}\bigr|$, where the indices are taken modulo $\ell$. We then measure the \emph{dominant} and \emph{minor direction} by setting $\sigma_{\max}(C):=\max\{\sigma^+(C),\sigma^-(C)\}$ and $\sigma_{\min}(C):=\min\{\sigma^+(C),\sigma^-(C)\}$. Analogous notation applies to paths and to the square of a Hamilton cycle, i.e. for the square of a Hamilton cycle $H$, define $\sigma^+(H):= \bigl|\{(i,j): j\in\{1,2\},\ v_iv_{i+j}\in E(G)\}\bigr|$ and $\sigma^-(H):= \bigl|\{(i,j): j\in\{1,2\},\ v_{i+j}v_i\in E(G)\}\bigr|$, where indices are taken modulo $n$. Set
$\sigma_{\max}(H):=\max\{\sigma^+(H),\sigma^-(H)\}$.

\begin{theorem}[{[\citen{Freschi}]}]\label{lo}
     Let $G$ be an oriented graph on $n \geq 3$ vertices. If $\delta(G) \geq \frac{n}{2}$ then there exists a Hamilton cycle $C$ in $G$ such that $\sigma_{\max}(C) \geq \delta(G)$.
 \end{theorem}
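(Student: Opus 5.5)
The plan is to argue by extremality over Hamilton cycles in the underlying graph, using the longest-path/rotation technique behind Dirac's theorem, but tracking orientations. Since $\delta(G)\ge n/2$, the underlying graph is Hamiltonian by Dirac's theorem. Among all Hamilton cycles $C=v_1\ldots v_nv_1$ of the underlying graph, choose one maximising $\sigma^+(C)$ (after possibly reversing, $\sigma^+(C)=\sigma_{\max}(C)$). Suppose for contradiction that $\sigma^+(C)<\delta(G)$. Then at least $n-\delta(G)+1$ edges of $C$ are backward edges.

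The key step is a Pósa-type rotation/switching count. For each $i$, a $2$-opt switch removes $v_iv_{i+1}$ and $v_jv_{j+1}$ and adds $v_iv_j$ and $v_{i+1}v_{j+1}$. This reverses the segment $v_{i+1}\ldots v_j$, so every edge of that segment flips its direction status. To control this, I would instead work with a Hamilton path $P=u_1\ldots u_n$ obtained by deleting one edge, together with a potential function that counts forward edges. For the endpoint $u_1$, consider its out-neighbours and in-neighbours. If $u_1\to u_j$ is an edge, rotating to the path $u_{j-1}\ldots u_1u_j\ldots u_n$ changes the status of the reversed segment. To avoid this, I would measure everything relative to the reversal symmetry: note that $\sigma^+$ of a reversed segment equals its $\sigma^-$. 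I would therefore choose the extremal object as the pair (cycle, orientation) maximising $\sigma_{\max}$, and use a counting argument over all $j$ simultaneously.

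Concretely, fix a backward edge $v_iv_{i+1}$, so that $v_{i+1}\to v_i$. The vertex $v_i$ has degree at least $\delta$. I would pigeonhole over the $\ge\delta$ neighbours $v_j$ of $v_i$ and the $\ge\delta$ neighbours $v_{j+1}$ of $v_{i+1}$ to find a $j$ for which the switch does not decrease $\sigma^+$. For the segment reversal I would use an averaging trick: summing the change over the two complementary choices (reverse $v_{i+1}\ldots v_j$ versus reverse the complementary arc) makes the segment contributions cancel. Hence some switch gains at least as much as the sum of the two new edge statuses minus the two old ones. Since $\delta\ge n/2$ gives at least $2\delta-n+1\ge1$ crossing indices, and orientation counting of $v_i$'s out-neighbours versus in-neighbours gives $d^+(v_i)+d^-(v_i)\ge\delta$, one shows that some crossing index produces a net gain, contradicting maximality.

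The main obstacle is exactly this segment-reversal issue: a single $2$-opt move can change $\sigma^+$ by a large amount, not by $O(1)$. If the averaging trick fails, I would fall back on the approach of Freschi and Lo. That approach first takes a maximum-length directed-consistent structure, namely a path system covering all vertices using only forward edges with the fewest components, and then applies Dirac-type absorption of the components. The bound then follows because each component joint costs one backward edge, and the number of components is at most $n-\delta(G)$, by a degree count on component endpoints.
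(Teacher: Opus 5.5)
The paper does not prove this statement; it quotes it from Freschi and Lo. Your proposal therefore has to stand on its own, and as written it has a genuine gap at its central step.

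\textbf{The averaging trick is vacuous.} For a crossing pair $(i,j)$, ``reverse $v_{i+1}\ldots v_j$'' and ``reverse the complementary arc'' are not two different switches. Both give the same Hamilton cycle $C'=v_1\ldots v_iv_jv_{j-1}\ldots v_{i+1}v_{j+1}\ldots v_nv_1$, read in opposite directions. The two quantities you average are therefore $\sigma^+(C')$ and $\sigma^-(C')$, which sum to $n$. So the two changes sum to $n-2\sigma^+(C)\le 0$, because you normalised $\sigma^+(C)=\sigma_{\max}(C)\ge n/2$. In particular the segment contributions do not cancel. Let the arc $v_{i+1}\ldots v_j$ contain $a$ forward and $b$ backward edges, and the complementary arc contain $c$ forward and $e$ backward edges. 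They contribute $(b-a)+(e-c)$, which equals $n-2\sigma^+(C)$ up to an additive constant. All you can conclude is $\sigma_{\max}(C')\ge n/2$, which holds for every Hamilton cycle.

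Nothing else in the sketch supplies orientation information. The count of $2\delta(G)-n+1$ crossing indices concerns $U(G)$ only. The inequality $d^+(v_i)+d^-(v_i)\ge\delta(G)$ is just the definition of total degree. So you never control the directions of the new edges $v_iv_j$ and $v_{i+1}v_{j+1}$, nor the loss of up to $j-i-1$ forward edges in the reversed segment. Without such control, no switch with a net gain is produced and there is no contradiction with maximality.

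\textbf{The fallback does not close the gap.} First, in a minimum directed path partition the endpoints need not be independent. So a degree count at the endpoints does not bound the number $k$ of paths. The bound $k\le n-\delta(G)$ is true, but it comes from the Gallai--Milgram theorem ($k\le\alpha(G)$) together with $\alpha(G)\le n-\delta(G)$, and you would need to invoke it.

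Second, and more seriously, ``Dirac-type absorption of the components'' is exactly the content of the theorem, and you give no argument for it. You need a Hamilton cycle of $U(G)$ that contains every path of the partition, all traversed in the same direction, with one joining edge per path. Consecutive paths need not be joinable end-to-start. Any rotation used to repair this reverses or splits segments and creates new backward edges, which is the same obstruction as in your main argument. The standard results that route a Hamilton cycle through a prescribed linear forest $F$ assume $\delta\ge(n+|E(F)|)/2$, which is far out of reach when $|E(F)|=n-k$. Without a genuine argument that the paths can be linked while creating at most $n-\delta(G)$ backward edges, the proposal does not prove the theorem.
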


An Ore-type analogue of  \cref{lo} was proposed as a conjecture by Ai et al.~\cite{Ai}.
Recall that, for a graph $G$, the Ore-type condition is defined as
$\sigma_2(G) := \min\{d(x) + d(y) \mid x,y \in V(G),\ x \text{ and } y \text{ are non-adjacent}\}$.
Recently, relevant progress has been made on this conjecture; \cite{Ai, Chang1}.
In particular, Chang et al.~\cite{Chang1} solved the conjecture asymptotically when $n$ is sufficiently large.

\begin{theorem}[{[\citen{Chang1}]}] \label{th2}
For every $\gamma > 0$, there exists a positive integer $n_0$ such that the following holds.
Let $G$ be an oriented graph on $n \ge n_0$ vertices with $\sigma_2(G)\ge n$.
Then $G$ contains  a Hamilton cycle $C$ such that  \[\sigma_{\max}(C) \ge  \max\left\{\frac{n}{2},\frac{\sigma_2(G)}{2}-\gamma n\right\}.\]
\end{theorem}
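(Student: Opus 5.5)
Since every Hamilton cycle $C$ satisfies $\sigma^+(C)+\sigma^-(C)=n$, any Hamilton cycle has $\sigma_{\max}(C)\ge n/2$. The underlying graph satisfies Ore's condition $\sigma_2(G)\ge n$, so a Hamilton cycle exists, and the $n/2$ term is free. The whole task is therefore to find a Hamilton cycle with at least $\sigma_2(G)/2-\gamma n$ edges in one direction. We may assume $\sigma_2(G)\ge (1+2\gamma)n$, since otherwise the $n/2$ bound already suffices. I would use the absorbing method combined with the regularity lemma. The plan is to find an almost spanning family of vertex-disjoint \emph{directed} paths, all oriented the same way, with few paths in total, and then stitch them into a Hamilton cycle losing only $o(n)$ forward edges.

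\textbf{Step 1: absorbing and reservoir sets.} I would first record the connectivity consequences of $\sigma_2(G)\ge(1+2\gamma)n$. Any two non-adjacent vertices have linearly many common neighbours. Hence any two vertices of degree at least $n/2$ can be joined by many short paths, where orientation is ignored on these connecting pieces. Using this, I would build a short absorbing path $A$ that can absorb any small leftover set, and a small random reservoir $R$ used for connections. The vertices of low degree (below $n/2$) form a clique in the underlying graph, since any two of them have degree sum below $n$. They need separate handling; I would first cover them by short paths through high-degree vertices and treat these paths as single units.

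\textbf{Step 2: a forward-heavy path cover in the reduced graph.} Apply the regularity lemma to $G-A-R$ to obtain an oriented reduced graph $\Gamma$ on $k$ clusters. Majority orientation of the regular pairs is used, and $\Gamma$ inherits an approximate Ore-type condition, $\sigma_2(\Gamma)\ge(\sigma_2(G)/n-\varepsilon)k$. The key claim is that $\Gamma$ admits a fractional cover by directed paths (equivalently, a fractional directed $1$-factor-like structure of directed edges) of total weight at least $(\sigma_2(G)/2n-\varepsilon)k$, with all edges directed consistently along the paths. I would prove this by a linear-programming/degree argument in the spirit of Freschi and Lo's proof of \cref{lo}. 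For each cluster, use out- and in-degrees; the Ore condition forces $d^+(x)+d^-(y)$ or a similar quantity to be large for suitable pairs, and a maximum directed $2$-matching argument (a directed path/cycle cover) has deficiency at most $k-\sigma_2(\Gamma)/2$. Blowing up via the blow-up lemma or standard embedding of long directed paths in regular pairs converts this into vertex-disjoint directed paths in $G$. These paths cover all but $\varepsilon n$ vertices, and their total number of (forward) edges is at least $\sigma_2(G)/2-2\varepsilon n$.

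\textbf{Step 3: connecting and absorbing.} I would chain the $O(k)$ long directed paths together with $A$ through $R$, using short connecting paths, which cost $O(k)$ edges of arbitrary direction. The uncovered vertices are then absorbed into $A$, losing at most $\varepsilon n$ forward edges in total. Choosing $\varepsilon\ll\gamma$ gives $\sigma^+(C)\ge\sigma_2(G)/2-\gamma n$.

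\textbf{Main obstacle.} I expect Step 2 to be the hard part: showing that an Ore-type rather than minimum-degree condition still yields a directed path cover of deficiency about $n-\sigma_2/2$. Low-degree vertices may have almost all their edges oriented unfavourably. My first attempt would be a Gallai–Milgram/König-type bound on the maximum directed $1$-matching of the bipartite out/in split graph. I would use Hall's condition, with the Ore inequality applied to a violating set $S$ and a non-neighbour of its neighbourhood, and handle the near-extremal configurations, where the Hall deficiency is close to tight, by a separate stability argument.
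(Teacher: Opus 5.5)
The paper does not prove this theorem; it is quoted from Chang et al.\ as background. Your plan therefore has to stand on its own, and it has a genuine gap in Steps~2--3. In general there are no $O(k)$ vertex-disjoint \emph{directed} paths that cover almost all vertices and carry about $\sigma_2(G)/2$ edges.

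Take $V(G)=A\cup B$ with $|A|=n/4$, $A$ independent, $G[B]$ any tournament, and every $A$--$B$ pair present and oriented from $A$ to $B$. Then $\sigma_2(G)=3n/2$. Each vertex of $A$ has in-degree $0$, so it must start its own directed path. Hence any disjoint family of directed paths missing at most $\varepsilon n$ vertices has at least $(1/4-\varepsilon)n$ members. A linear number of pieces cannot be joined through a reservoir of size $o(n)$, and leaving them uncovered produces a leftover too large to absorb. Note also that every Hamilton cycle has exactly one forward and one backward edge at each vertex of $A$. The optimum $3n/4=\sigma_2(G)/2$ is attained by concatenating $n/4$ directed paths, each starting in $A$, via single backward edges.

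The same defect is visible in the reduced graph. Suppose $i_1\to\dots\to i_\ell$ is a path component of your directed $1$-matching and $\Gamma$ is acyclic on these clusters. Then every directed path in the blow-up has at most $\ell$ vertices, so the component yields about $m$ short paths rather than one long one.

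What you need instead is a bounded number of long paths of $U(G)$ that already contain the backward link edges. In $\Gamma$ this is a (possibly fractional) spanning family of cycles of $U(\Gamma)$, with single edges allowed as degenerate $2$-cycles, whose forward counts sum to at least $(\sigma_2(G)/(2n)-\varepsilon)k$. Obtaining it from a maximum directed $1$-matching means closing up its path components by arcs from path starts to path ends. Maximality forbids arcs from an end to a start, so every link is backward. This is a Hall-type matching problem between starts and ends. It is the substantive step, and your proposal does not address it.

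By contrast, the step you flag as the main obstacle is short and needs no stability analysis. Apply K\"onig's theorem to the out/in split graph. A minimum vertex cover corresponds to sets $X,Y$ with no arc from $X$ to $Y$. Then $X\cap Y$ is independent in $U(\Gamma)$, so a maximum directed $1$-matching has at least $k-\alpha(U(\Gamma))$ edges, where $\alpha(\cdot)$ denotes independence number. Moreover $\alpha(U(G))\le n-\sigma_2(G)/2$, because two vertices of an independent set $I$ have all their neighbours outside $I$.

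It is this independence bound, not $\sigma_2$ itself, that passes robustly to $\Gamma$. Your claim $\sigma_2(\Gamma)\ge(\sigma_2(G)/n-\varepsilon)k$ can fail. A non-edge of $\Gamma$ may come from a dense irregular pair, for instance between two clusters of vertices of degree below $n/2$, and the Ore condition says nothing about such a pair.

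Finally, the set of vertices of degree below $n/2$ can have size close to $n/2$. Covering it by short paths and contracting them is therefore not a negligible preprocessing step.
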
 

In view of the classical minimum degree condition in \cref{posa} and the oriented discrepancy result of Freschi and Lo in \cref{lo}, it is natural to ask for analogous discrepancy guarantees for the square of a Hamilton cycle in oriented graphs. This question was explicitly raised by Freschi and Lo \cite{Freschi}.

\begin{problem}[{[\citen{Freschi}]}]\label{pro2}
Let $G$ be an oriented graph on $n$ vertices with $\delta(G)\ge 2n/3$, and $H$ be the square of a Hamilton cycle in $G$. Determine a maximum lower bound on $\sigma_{\max}(H)$.
\end{problem}

Our main result resolves \cref{pro2} by establishing a general lower bound on $\sigma_{\max}(H)$ in terms of a function $f(\delta(G),n)$.

\begin{theorem}\label{th1}

For every $\xi>0$, there exists $n_0\in\mathbb{N}$ such that any oriented graph $G$ on $n\ge n_0$ vertices with $\delta(G)\ge 2n/3$ contains the square of a Hamilton cycle $H$ satisfying $\sigma_{\max}(H)\ge \max\{n,\; f(\delta(G),n)-\xi n\},$
where $f(\delta(G),n)>n$ is a function of $\delta(G)$ and $n$.

\end{theorem}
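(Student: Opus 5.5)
This is a plan rather than a proof, and throughout I write $\delta=\delta(G)$. The square of a Hamilton cycle has $2n$ edges, so $\sigma^+(H)+\sigma^-(H)=2n$ and therefore $\sigma_{\max}(H)\ge n$ automatically. The bound $\sigma_{\max}(H)\ge n$ then follows as soon as \cref{posa} with $k=2$ provides some square of a Hamilton cycle in the underlying graph of $G$. All the work is in the second term, $f(\delta,n)-\xi n$. I would aim for $f(\delta,n)=n+c(3\delta-2n)$ for a suitable constant $c>0$, say $c=1/2$ in analogy with the $\sigma_2/2$ bound in \cref{th2}. With this choice, $f>n$ exactly when $\delta>2n/3$, and it can be proved robustly via regularity and absorption. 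We may assume $3\delta-2n\ge \xi n$, since otherwise the claim reduces to the trivial bound.

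\textbf{Step 1: find a biased triangle structure.} Fix $\delta=(2/3+\eta)n$. Every vertex $v$ has degree at least $\delta$, so $d^+(v)\ge \delta/2$ or $d^-(v)\ge\delta/2$. By reversing all orientations if necessary, assume that at least $n/2$ vertices satisfy $d^+(v)\ge d^-(v)$. The goal is to build, inside $G$, a square path in which as many consecutive pairs $v_iv_{i+1}$ and $v_iv_{i+2}$ as possible are oriented forward. The local object is the transitive triangle $x\to y\to z$ with $x\to z$. Used consecutively along the square path, it contributes $3$ forward edges out of $3$. A directed triangle contributes only $2$ forward out of $3$.

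So I would apply the regularity lemma to the oriented graph and form the reduced oriented multigraph $R$. In $R$, I would look for an almost perfect tiling by \emph{forward-heavy} square-path pieces, for instance square paths of bounded length whose forward density exceeds $1/2$ by roughly $c\eta$. The minimum-degree surplus $\eta n$ over $2n/3$ is what should supply this: a vertex with large out-degree, together with the $3\delta-2n$ common neighbours of any edge, yields many transitive triangles. A counting argument of Goodman/Moon–Moser type should then lower-bound the fraction of transitive triangles in the triangle tiling, in terms of $\eta$.

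\textbf{Step 2: connect and absorb.} Next I would follow the Komlós–Sárközy–Szemerédi blow-up and connecting framework. First, use $\delta\ge(2/3+\eta)n$ to find a small absorbing square path, and show that any two ordered edges can be joined by short square paths. Then embed long forward-biased square paths in the regular pairs or triples via the blow-up lemma, and absorb the leftover vertices. The connecting paths and the absorber have only $o(n)$ edges in total, so they cost at most $\xi n$ in discrepancy, which is exactly why the statement carries the $-\xi n$ term.

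\textbf{Main obstacle.} The hardest step is Step 1: proving that the triangle tiling can be chosen with a linear surplus of forward-consistent edges. A plain $K_3$-factor in $R$ may consist mostly of cyclic triangles, and consecutive triangles must also be glued with forward orientation between them. I would first try a weighted fractional tiling argument. Assign each triangle of $R$ its forward excess and maximise the total excess over fractional $K_3$-tilings. Then use LP duality together with the degree condition to show that the optimum is at least $c\eta$ times the number of vertices. The gluing orientations between consecutive tiles would be handled by choosing the cyclic order of the tiles greedily.
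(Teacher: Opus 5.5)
Your outer framework matches the paper's: the trivial bound $\sigma_{\max}(H)\ge n$ from \cref{posa}, regularity plus the blow-up lemma, absorbing and connecting at $o(n)$ cost, and reversing the traversal of each piece so that dominant directions align. The gap is Step~1, which carries all the content, and the mechanism you propose for it cannot produce any surplus.

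Suppose you blow up a triangle tile $X,Y,Z$ of $R$ and cover its clusters by a square path using only tile edges. Every three consecutive vertices are pairwise adjacent, so they lie in distinct clusters. Hence the path is forced to be periodic, $x_1y_1z_1x_2y_2z_2\cdots$. In each period the step-one pairs are $(X,Y),(Y,Z),(Z,X)$ and the step-two pairs are $(X,Z),(Y,X),(Z,Y)$. So every cluster pair is traversed once in each direction, and exactly half the edges are forward, whether the triangle is transitive or cyclic. The ``$3$ out of $3$'' count of a single transitive triangle does not survive the blow-up.

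The same cancellation kills bounded forward-heavy square-path pieces of $R$. To cover clusters of linear size, the lifted path must revisit them. It can only do so by cycling inside a triangle of the piece, which gives exactly $1/2$, or by returning along the piece in reverse order, which cancels the excess gained going out. Consequently Goodman/Moon--Moser counts of transitive triangles, and fractional triangle tilings optimised by LP duality, aim at a quantity that yields zero linear gain over $n$.

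What is actually needed are tiles of $R$ that are tournaments on at least $4$ vertices. For $T_4$ with periodic order $ABCDABCD\cdots$, the diagonals $AC,BD$ still cancel. However, the step-one edges form the $4$-cycle $ABCDA$, and ordering along a Hamilton path of $T_4$ makes at least three of them forward. This gives $5$ forward edges out of $8$ per period, so $N_4=5$.

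The number of such tiles is governed by $3\delta-2n$. That is the guaranteed common neighbourhood of a triangle, not of an edge (for an edge it is $2\delta-n$), and it is exactly what creates $K_4$'s. The paper applies the local Hajnal--Szemer\'edi theorem (\cref{HSnew1}) to $R$ to obtain $A_r$ copies of $T_r$ and $\overline{A}_r$ copies of $T_{r-1}$. It then takes in each tile a Hamilton square coupling attaining $N_r$ or $N_{r-1}$, and blows up. For $2n/3<\delta\le 3n/4$ this gives $f=5A_4+3\overline{A}_4=3\delta-n$, which is your form with $c=1$.

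Without replacing your Step~1 by such a $K_4$-type tiling argument, your plan yields no linear surplus over $n$ at all.
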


We further derive an explicit expression for $f(\delta(G),n)$ and show that this bound is larger than $n$. Moreover, under additional structural assumptions on $G$, the above estimate can be made more explicit. In particular, when $G$ is $T_6$-free (i.e., $G$ contains no tournament on $6$ vertices as a subgraph), our argument yields the following corollary.

\begin{corollary}\label{coro}
For every $\xi > 0$, there exists $n_0 \in \mathbb{N}$ such that every $T_6$-free oriented graph $G$ on $n \ge n_0$ vertices satisfying $\delta(G) \ge 2n/3$ contains the square of a Hamilton cycle $H$ with
\[
\sigma_{\max}(H)\ge \max\{n,\; 3\delta(G)-n-\xi n\}.
\]
\end{corollary}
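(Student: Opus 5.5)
We plan to derive the corollary from the machinery behind \cref{th1}, specialised to the $T_6$-free case. The lower bound $\sigma_{\max}(H)\ge n$ is immediate: the square of a Hamilton cycle has $2n$ edges, so one of $\sigma^+(H),\sigma^-(H)$ is at least $n$, and $H$ exists by \cref{posa} with $k=2$ applied to the underlying graph. The real content is therefore the bound $3\delta(G)-n-\xi n$, which we obtain by regularity and absorption. Write $\delta=\delta(G)$. We apply the (degree form of the) regularity lemma to $G$, with the two orientations recorded separately, and obtain a reduced oriented graph $R$ on $k$ clusters with $\delta(R)\ge(\delta/n-\varepsilon')k$. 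Each cluster pair $V_iV_j$ keeps a dominant direction: an edge of $R$ is oriented $ij$ if the forward density from $V_i$ to $V_j$ dominates. Since the reduced graph is oriented and has minimum degree above $2k/3$, a Hajnal--Szemerédi / Corrádi--Hajnal type argument gives an almost perfect triangle factor in $R$. This step is standard.

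The key step is to choose a structure in $R$ that is very unbalanced in one direction. For a triangle $xyz$ in the underlying graph of $R$ we count the \emph{forward edges} of its 2-blowup in a square-path ordering. If the triangle is transitive, with order $x\to y\to z$, and we traverse its blown-up clusters cyclically as $x,y,z,x,y,z,\dots$ with each step and each two-step following the orientation, then essentially all $2|V|$ edges are forward. A cyclic triangle instead gives only a $2/3$ or $1/3$ fraction. We therefore want many disjoint transitive triangles, or better, longer transitive substructures. Here $T_6$-freeness enters: a vertex has out- or in-degree at least $\delta/2$, and counting transitive triangles against the degree forces a weighting. The target $3\delta-n$ suggests the following scheme. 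Partition the vertices into a part covered by square-paths that are fully forward, which contributes $2$ per vertex, and a remainder of size $m$ that contributes only $1$ per vertex, giving $2(n-m)+m=2n-m$. Hence we need $m\le 3n-3\delta$, i.e.\ a fractional "transitive-square-path tiling" covering at least $3\delta-2n$ of the weight. We would obtain this from an LP-duality / fractional tiling argument in $R$: each vertex has at least $3\delta-2n$ common neighbours with any edge, and $T_6$-freeness prevents large transitive subtournaments that would otherwise allow better tilings but also complicate the count. We expect this to be the main obstacle: we must prove that $R$ has a fractional tiling by "good" gadgets of total weight $(3\delta/n-2-\varepsilon)k$, where a good gadget is a blow-up carrying a square path with all edges forward. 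We would try first an extremal/greedy argument: take a maximum matching of transitive triangles directed consistently, and use the degree condition to show that vertices outside can be augmented.

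Finally we use connecting and absorbing lemmas for square paths, in the style of Komlós--Sárközy--Szemerédi, to connect the square paths embedded in the regular triples (via the blow-up lemma) into the square of a Hamilton cycle. The connections, the absorber and the exceptional vertices use only $O(\varepsilon n)$ vertices and so cost at most $\xi n/2$ forward edges. The forward-edge count then gives $\sigma^+(H)\ge 2n-m-\xi n\ge 3\delta-n-\xi n$, provided the chosen direction is consistent across the gadgets. We ensure this by reversing all orientations if needed, since the total of $\sigma^+$ over the two choices is symmetric.
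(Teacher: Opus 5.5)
There is a genuine gap in your second step: the gadget you chose carries no discrepancy at all. In a square path inside a blow-up of a triangle $xyz$, any three consecutive vertices lie in distinct clusters, so the clusters are visited in a fixed cyclic order $x,y,z,x,y,z,\dots$. Each pair of clusters is then crossed once in each order per period: for example $x_iy_i$ at distance $1$ and $y_ix_{i+1}$ at distance $2$. Hence exactly $3$ of the $6$ edges in each period are forward, whatever the orientation. For the transitive triangle $x\to y$, $y\to z$, $x\to z$, the three edges $z_ix_{i+1}$, $y_ix_{i+1}$ and $z_iy_{i+1}$ are all backward. So neither ``essentially all $2|V|$'' nor ``$2/3$ or $1/3$'' is right; the fraction is always exactly one half.

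More seriously, the key step you isolate is false. You want a fractional tiling of $R$ by gadgets carrying long all-forward square paths, of total weight about $(3\delta/n-2)k$. Such a path in a bounded blow-up needs a closed walk $c_1c_2\dots$ with $c_i\to c_{i+1}$ and $c_i\to c_{i+2}$, such as $\vec{C}_\ell^{\,2}$; transitive structures never contain one. Take the balanced blow-up of the transitive $T_5$. It is $T_6$-free, has $\delta=4n/5$, and is acyclic, so every all-forward square path has at most $5$ vertices. Your scheme therefore gives only $n$, while the corollary demands $7n/5-\xi n$. That value is attained by traversing $V_1,\dots,V_5$ cyclically, which gives $7$ forward and $3$ backward edges per period. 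The ``$2$ per vertex versus $1$ per vertex'' accounting does not describe the relevant structures.

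The missing idea is to tile by larger tournaments and pay a fixed loss of $3$ per tile. The paper tiles $R$ via the local Hajnal--Szemer\'edi theorem (\cref{HSnew1}) with $A_r$ copies of $T_r$ and $\overline{A}_r$ copies of $T_{r-1}$. This uses exactly $k-\delta(R)$ tiles. In the blow-up of each tile $T_j$ it traverses the clusters cyclically with at most $3$ minor edges per period, i.e.\ with dominant count $N_j=2j-3$ (\cref{Cr}: by hand for $j=3,4$, by computer for $j=5$). This yields about $m\bigl(2k-3(k-\delta(R))\bigr)\approx 2n-3(n-\delta)=3\delta-n$ dominant edges.

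$T_6$-freeness enters through the counting lemma and Tur\'an's theorem, not through degree counts of transitive triangles. $U(R)$ is $K_6$-free, so $\delta(R)\le 4k/5$, which forces $r\in\{4,5\}$. The tiles therefore have at most $5$ vertices, exactly the range where $N_j=2j-3$ is known.

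Finally, consistent direction comes from reversing the traversal of individual square paths before concatenating them; this swaps $\sigma^+$ and $\sigma^-$ of that path. A global reversal of all orientations cannot reconcile gadgets whose dominant directions differ.
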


\noindent\textbf{Organization.}
The remainder of this paper is organized as follows. In \cref{sec:prep}, we introduce the necessary notation and outline the proof method.
In \cref{sec:tool}, we collect the auxiliary  tools used throughout the paper. In \cref{sec:cac}, we establish the key almost-covering, absorbing, and connecting lemmas that underlie the proof of \cref{th1}. Ultimately, in \cref{proofmain}, we complete the proof of \cref{th1} and derive \cref{coro}.


\section{Preliminaries}\label{sec:prep}
\subsection{Notation}\label{subsec:notation}
For standard notation not defined in this paper, we refer the reader to \cite{Bang}. Let $G$ be a digraph. We write $V(G)$ for the vertex set of $G$, $E(G)$ for the edge set of $G$, and define $|G|:=|V(G)|$ and $e(G):=|E(G)|$. 
 For a vertex $v\in V(G)$, let $N_G^+(v)=\{u\in V(G):vu\in E(G)\}$ and $N_G^-(v)=\{u\in V(D):uv\in A(G)\}$ be the \emph{out-neighborhood} and \emph{in-neighborhood} of $v$, respectively. Their sizes are denoted by $d_G^+(v)=|N_G^+(v)|$ and $d_G^-(v)=|N_G^-(v)|$, for vertices in $G$, we often write $d^+(v)$ and $d^-(v)$ for short. 
The \textit{total degree} $d(v)$ in $G$ is the number of edges incident to $v$, i.e. $d(v):=d^+(v)+d^-(v)$, and the \textit{minimum total degree} of $G$ is denoted by $\delta(G)$. When there is no risk of confusion, we abbreviate minimum total degree to minimum degree.

An oriented graph is a digraph without loops such that between every two vertices there is at most one edge. A path or cycle is called \emph{directed} if all of its edges are directed consistently along the path or cycle. Unless stated otherwise, all paths and cycles in this paper are oriented (not necessarily directed). For brevity, we call the $k$th power of a path a $k$-path, and the $k$th power of a cycle a $k$-cycle. 

The underlying graph of $G$, denoted by $U(G)$, is the graph obtained from $G$ by ignoring the directions of its edges. 

Given two vertices $x,y\in V(G)$, we write $xy$ for the edge directed from $x$ to $y$. Given subsets $A,B\subseteq V(G)$, not necessarily disjoint, let $E_G(A,B)$, or simply $E(A,B)$ when $G$ is clear from the context, denote the set of all edges $xy\in E(G)$ such that $x\in A$ and $y\in B$. We write $e_G(A,B):=|E_G(A,B)|$, and omit the subscript $G$ whenever there is no danger of confusion. For a set $X\subseteq V(G)$, we write $G[X]$ for the subdigraph of $G$ induced by $X$, and $G\setminus X$ for the subdigraph induced by $V(G)\setminus X$. If $A,B\subseteq V(G)$ are disjoint, then $G[A,B]$ denotes the spanning bipartite subdigraph of $G$ with vertex set $A\cup B$ and edge set $E_G(A,B)\cup E_G(B,A)$.

Given a (di)graph $G$ and $t\in \mathbb{N}$, we let $G(t)$ denote the $t$-blow-up of $G$. More precisely, $V(G(t)):=\{v^j : v\in V(G)\text{ and } j\in [t]\}$ and $E(G(t)):=\{v^m w^\ell : vw\in E(G)\text{ and } m,\ell\in [t]\}$. 

Any orientation of the complete graph $K_n$ is called a tournament on $n$ vertices and is often denoted by $T_n$. Let $\mathcal{T}_n$ be the set of all such tournaments.

For graphs or digraphs $G$ and $H$, an $H$-tiling in $G$ is a collection of vertex-disjoint subgraphs of $G$, each isomorphic to $H$. An $H$-factor in $G$ is an $H$-tiling covering all vertices of $G$. For $k\in\mathbb{N}$, we write $[k]=\{1,2,\dots,k\}$. For a family of disjoint paths \( \mathcal{P} = \{P_1, P_2, \ldots, P_t\} \), we define $\sigma_{\max}(\mathcal{P}) = \sum_{i=1}^t \sigma_{\max}(P_i).$ The same definition applies to a family of disjoint 2-cycles.

Throughout the paper, we omit floor and ceiling signs whenever they are not essential. The constants in the hierarchies used to state our results are chosen from right to left. For example, when we write $0<a\ll b\ll c\le 1$, we mean that there exist non-decreasing functions $f:(0,1]\to(0,1]$ and $g:(0,1]\to(0,1]$ such that the desired conclusion holds whenever $0<a,b,c\le 1$, $b\le f(c)$, and $a\le g(b)$.

\subsection{Proof Overview}\label{sec:overview}

We now sketch the proof of \cref{th1}. The argument splits naturally based on whether $\delta(G)$ is close to the threshold $2n/3$. When $\delta(G)$ is close to $2n/3$, \cref{posa} yields the required discrepancy bound. Otherwise, we make use of the absorbing method introduced by R\"odl, Ruci\'nski and Szemer\'edi \cite{rodl}, which has become a fundamental tool for studying spanning structures in graphs, digraphs, and hypergraphs.

\medskip
\noindent\textbf{Step 1: Find an absorbing $2$-path.}
We first construct a short absorbing $2$-path $P_{\rm abs}$ with the property that it can absorb any sufficiently small leftover vertex set while preserving its end edges.

\medskip
\noindent\textbf{Step 2: Construct a reservoir set.}
Next we we choose a reservoir set $\mathcal R$, which will later be used to connect the $2$-paths produced in the covering step. The main part of the argument is then to analyze the remaining graph after removing $P_{\rm abs}$ and $\mathcal R$.

\medskip
\noindent\textbf{Step 3: Find an almost-spanning collection of $2$-paths.}
After deleting $V(P_{\rm abs})\cup \mathcal R$, we apply the Diregularity Lemma to find a mixed $\{T_r,T_{r-1}\}$-tilings in the reduced graph. We then use the Blow-up Lemma to lift these tiles back to the original graph and obtain a family $\mathcal Q$ of vertex-disjoint $2$-paths covering almost all remaining vertices.

\medskip
\noindent\textbf{Step 4: Connect the $2$-paths and absorb the remaining vertices.}
Finally, we use the reservoir $\mathcal R$ to connect the disjoint $2$-paths in $\mathcal Q$ and the absorbing path $P_{\rm abs}$ into a single spanning $2$-cycle. We then use $P_{\rm abs}$ to absorb all remaining uncovered vertices, thereby completing the square of a Hamilton cycle.
\medskip

 The key point is that the discrepancy in Step $3$ is preserved throughout the final assembly. Indeed, the family $\mathcal Q$ provides the dominant contribution to the discrepancy, whereas the connecting $2$-paths use only vertices from the reservoir $\mathcal R$ and therefore affect the discrepancy by at most a negligible error. Furthermore, these $2$-paths can be concatenated in an order that aligns with their dominant direction. Consequently, the resulting square of a Hamilton cycle attains the required discrepancy.

\section{Auxiliary tools}\label{sec:tool}

\subsection{The Diregularity Lemma}\label{subsec:regular}
 
We start with some definitions. The \emph{density} of bipartite graph \(G[A,B]\) with vertex classes \(A\) and \(B\) is defined to be
\[
d_G(A,B) \coloneqq \frac{e_G(A,B)}{|A||B|}.
\]
We often write \(d(A,B)\) if this is unambiguous. Given \(\varepsilon>0\), we say that \(G\) is \emph{\(\varepsilon\)-regular} if for all subsets \(X \subseteq A\) and \(Y \subseteq B\) with \(|X|>\varepsilon|A|\) and \(|Y|>\varepsilon|B|\) we have that \(|d(X,Y)-d(A,B)|<\varepsilon\). Given \(d \in [0,1]\) we say that \(G\) is \emph{\((\varepsilon,d)\)-super-regular} if it is \(\varepsilon\)-regular and furthermore \(d_G(a) \ge (d-\varepsilon)|B|\) for all \(a \in A\) and \(d_G(b) \ge (d-\varepsilon)|A|\) for all \(b \in B\). (This is a slight variation of the standard definition of \((\varepsilon,d)\)-super-regularity, where one requires \(d_G(a) \ge d|B|\) and \(d_G(b) \ge d|A|\).)

The Diregularity Lemma, proposed by Alon and Shapira \cite{Alon}, serves as the directed graph counterpart of the Regularity Lemma, with a proof strategy analogous to that of the undirected version. In this paper, we adopt the degree form of the Diregularity Lemma, which can be deduced from its standard formulation via the identical approach used to derive the degree form of the undirected Regularity Lemma (a sketch proof of this derivation is available in \cite{Kuhn}).

\begin{lemma}[Degree form of the Diregularity Lemma]\label{lem:diregularity}
For every \(\varepsilon \in (0,1)\) and $M^\prime\in\mathbb{N}$, there are integers $M=M(\varepsilon,M^\prime)\in\mathbb{N}$ and $n_0=n_0(\varepsilon,M^\prime)\in\mathbb{N}$ such that if \(G\) is a digraph on \(n \ge n_0\) vertices and \(d \in [0,1]\) is any real number, then there is a partition of the vertices of \(G\) into \(V_0, V_1, \ldots, V_k\) and a spanning subdigraph \(G^\prime\) of \(G\) such that the following holds:

\medskip

\textnormal{($i$)} \(M^\prime \le k \le M\),

\textnormal{($ii$)} \(|V_0| \le \varepsilon n\),

\textnormal{($iii$)} \(|V_1| = \cdots = |V_k| =: m\),

\textnormal{($iv$)} \(d_{G^\prime}^+(x) > d_G^+(x) - (d+\varepsilon)n\) for all vertices \(x \in G\),

\textnormal{($v$)} \(d_{G^\prime}^-(x) > d_G^-(x) - (d+\varepsilon)n\) for all vertices \(x \in G\),

\textnormal{($vi$)} for all \(1 \le i \le k\) the digraph \(G^\prime[V_i]\) is empty,

\textnormal{($vii$)} for all \(1 \le i, j \le k\) and \(i \ne j\) the bipartite graph \(G^\prime[V_i, V_j]\) whose vertex classes are \(V_i\) and \(V_j\) and whose edges are all the edges in \(G^\prime\) directed from \(V_i\) to \(V_j\) is \(\varepsilon\)-regular and has density either \(0\) or at least \(d\).
\end{lemma}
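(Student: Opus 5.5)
This lemma is quoted from the literature rather than proved in the paper. The plan is to derive it from the standard Diregularity Lemma of Alon and Shapira, in the same way the degree form of Szemerédi's Regularity Lemma follows from the standard one, as sketched in \cite{Kuhn}.

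\textbf{Step 1: a regular partition.} Apply the standard Diregularity Lemma with parameter $\varepsilon' \ll \varepsilon$ and lower bound $M^\prime$ on the number of parts. This gives an equipartition $V_0',V_1',\dots,V_{k'}'$ with $|V_0'|\le \varepsilon' n$ and $M^\prime\le k'\le M(\varepsilon',M^\prime)$. All but at most $\varepsilon' k'^2$ ordered pairs $(V_i',V_j')$ span an $\varepsilon'$-regular bipartite digraph in the direction $V_i'\to V_j'$. Regularity is applied to each of the two orientations of a pair.

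\textbf{Step 2: delete few edges to build $G^\prime$.} Remove from $G$:
\begin{itemize}
\item all edges inside each class $V_i'$;
\item all edges of directed pairs that are not $\varepsilon'$-regular;
\item all edges of directed pairs whose density is below $d$;
\item all edges incident to $V_0'$.
\end{itemize}
Edges of the last type may instead be kept in $G^\prime$, since property (vi) only concerns $V_1,\dots,V_k$. Pairs of density less than $d$ cost each vertex at most $dn$ out-edges and at most $dn$ in-edges. Edges inside a class cost each vertex at most $n/k'\le \varepsilon' n$. Irregular pairs cost at most $\varepsilon' k'^2 m^2\le \varepsilon' n^2$ edges in total, but this bound is only global, not per vertex.

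\textbf{Step 3: restore the per-vertex bounds.} This is the main obstacle. Call a vertex bad if it is incident to more than $\sqrt{\varepsilon'}\,n$ edges of irregular pairs. By double counting there are at most $2\sqrt{\varepsilon'}\,n$ bad vertices. Moving all bad vertices into the exceptional set would destroy the equal class sizes and could harm regularity. So instead, for each cluster $V_i'$, remove the same number of vertices, and in particular all of its bad vertices. For this, first discard every cluster containing more than $\varepsilon'^{1/4} m$ bad vertices; there are at most $2\varepsilon'^{1/4} k'$ such clusters. From each remaining cluster, delete exactly $\lfloor \varepsilon'^{1/4} m\rfloor$ vertices, including all of its bad ones, and add them to $V_0$.

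The resulting clusters $V_i$ have equal size $m\ge (1-\varepsilon'^{1/4})m'$, where $m'$ is the original cluster size. By the slicing lemma, each surviving pair is $2\varepsilon'$-regular. Its density changes by at most $\varepsilon'$, so density at least $d$ can be ensured by working with threshold $d-\varepsilon/2$ at the start. The exceptional set satisfies $|V_0|\le \varepsilon' n+2\varepsilon'^{1/4}n+\varepsilon'^{1/4}n\le \varepsilon n$. Choosing $\varepsilon'\le(\varepsilon/10)^4$, every vertex loses at most $(d+\varepsilon)n$ out-edges and at most $(d+\varepsilon)n$ in-edges. This gives (iv) and (v).

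\textbf{Step 4: check the remaining properties.} After Step 3, take $G^\prime$ to be $G$ minus the deleted edges. Properties (vi) and (vii) then hold by construction, and $k$ lies in $[M^\prime/2,M]$. To guarantee $k\ge M^\prime$ exactly, start the argument with $2M^\prime$ in place of $M^\prime$. Finally, take $n_0$ from the standard lemma.
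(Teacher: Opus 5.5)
Your route, cleaning up an Alon--Shapira regular partition as in the undirected degree form, is what the paper intends. The paper gives no proof of its own and only points to the sketch in \cite{Kuhn}. However, as written the per-vertex bounds (iv) and (v) do not follow.

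In Step 2 you claim that pairs of density below $d$ cost each vertex at most $dn$ out-edges and $dn$ in-edges. That is false. Density below $d$ controls only the \emph{average} degree across the pair, not individual degrees. For instance, let $G$ be a random digraph of density $d/2$ together with one vertex $v$ with $N^+(v)=V(G)\setminus\{v\}$. Adding $v$ changes the density of any set of size $>\varepsilon' m'$ by less than $1/(\varepsilon' m')$. So every pair $(V_i'\to V_j')$ is $\varepsilon'$-regular of density below $d$, and deleting the sparse pairs removes almost all of $v$'s out-edges. Yet $v$ has no edges in irregular pairs, so nothing in your Step 3 forces it into $V_0$. Your bad-vertex notion handles only irregular pairs, and those are not the only source of per-vertex loss.

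The missing ingredient is to use the regularity of the sparse pairs themselves. Suppose $(V_i'\to V_j')$ is $\varepsilon'$-regular with density below $\theta$. Then at most $\varepsilon' m'$ vertices of $V_i'$ have at least $(\theta+\varepsilon')m'$ out-neighbours in $V_j'$; otherwise they would form a set $X$ with $d(X,V_j')\ge\theta+\varepsilon'$. Likewise, at most $\varepsilon' m'$ vertices of $V_j'$ have at least $(\theta+\varepsilon')m'$ in-neighbours in $V_i'$.

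Now sum over the at most $k'^2$ ordered pairs and double count. At most $2\sqrt{\varepsilon'}\,n$ vertices are exceptional in this sense for more than $\sqrt{\varepsilon'}\,k'$ pairs; add them to your bad set. Every remaining vertex then loses at most $(\theta+\varepsilon'+\sqrt{\varepsilon'})n$ edges in each direction to sparse pairs. The rest of your Step 3 goes through with slightly adjusted constants.

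Your threshold correction goes the wrong way. Keeping pairs of original density at least $d-\varepsilon/2$ only guarantees final density at least $d-\varepsilon/2-\varepsilon'$ after slicing. You need $\theta=d+\varepsilon'$, which costs only an extra $O(\varepsilon')n$ per vertex.

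Two smaller points:
\begin{itemize}
\item The edges incident to $V_0$ \emph{must} be kept, not merely may be, since (iv) and (v) are required for exceptional vertices as well.
\item The bound $n/k'\le\varepsilon' n$ for edges inside clusters requires applying the standard lemma with at least $\max\{2M',1/\varepsilon'\}$ parts.
\end{itemize}
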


The vertex sets \(V_1, \ldots, V_k\) are called \emph{clusters}, \(V_0\) is called the \emph{exceptional set} and the vertices in \(V_0\) are called \emph{exceptional vertices}. The last condition of the lemma says that all pairs of clusters are \(\varepsilon\)-regular in both directions (but possibly with different densities). We call the spanning digraph \(G^\prime \subseteq G\) given by the Diregularity lemma the \emph{pure digraph}. Given clusters \(V_1, \ldots, V_k\) and a digraph \(G^\prime\), the \emph{reduced digraph} \(R^\prime\) with parameters \((\varepsilon,d)\) is the digraph whose vertex set is \([k]\) and in which \(ij\) is an edge if and only if the bipartite graph whose vertex classes are \(V_i\) and \(V_j\) and whose edges are all the \(V_i\)-\(V_j\) edges in \(G^\prime\) is \(\varepsilon\)-regular and has density at least \(d\). (Therefore if \(G^\prime\) is the pure digraph, then \(ij\) is an edge in \(R^\prime\) if and only if there is a \(V_i\)-\(V_j\) edge in \(G^\prime\)).

Note that the latter holds if and only if \(G^\prime[V_i, V_j]\) is \(\varepsilon\)-regular and has density at least \(d\). It turns out that \(R^\prime\) inherits many properties of \(G\), a fact that is crucial in our proof. However, \(R^\prime\) is not necessarily oriented even if the original digraph \(G\) is. 
That said, the following lemma demonstrate that we can construct a graph which almost inherits the minimum degree $\delta(G)$ of the original graph $G$.

\begin{lemma}[{[\citen{Kelly}]}]\label{dirprop}
Let \(G\) be an oriented graph of order at least \(n_0\) and let \(R^\prime\) be the reduced digraph by applying the Diregularity Lemma to \(G\) with parameters \(\varepsilon\), \(d\) and \(M^\prime\). Then \(R^\prime\) has a spanning oriented subgraph \(R\) with  \(\delta(R) \geq \left(\delta(G)/|G| - (3\varepsilon + 2d) \right) |R|\).
\end{lemma}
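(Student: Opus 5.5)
The plan is a direct double-counting argument on the pure digraph $G'$. The key observation is that, since $G$ is oriented, the total number of $G'$-edges between two clusters is at most $m^2$ whichever way they point. So the orientation we choose for a double edge of $R'$ does not matter.

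\textbf{Constructing $R$.} I would take $R$ on vertex set $[k]$ as follows. For every pair $\{i,j\}$ such that exactly one of $ij, ji$ lies in $E(R')$, keep that edge. For every pair such that both $ij$ and $ji$ lie in $E(R')$, keep exactly one of them, chosen arbitrarily. Then $R$ is a spanning oriented subgraph of $R'$. Moreover, $i$ and $j$ are adjacent in $R$ if and only if they are adjacent in $U(R')$, that is, if and only if $G'$ has at least one edge between $V_i$ and $V_j$. This equivalence uses the remark after \cref{lem:diregularity} that, for the pure digraph, $ij\in E(R')$ exactly when there is a $V_i$--$V_j$ edge in $G'$.

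\textbf{Counting degrees.} Fix a cluster $V_i$. Summing \textnormal{($iv$)} and \textnormal{($v$)} over $x\in V_i$ gives
\[
\sum_{x\in V_i} d_{G'}(x) > m\bigl(\delta(G)-2(d+\varepsilon)n\bigr).
\]
For an upper bound on the same sum, recall that $G'[V_i]$ is empty by \textnormal{($vi$)}. So every $G'$-edge at $x\in V_i$ goes either to $V_0$ or to some other cluster $V_j$.
\begin{itemize}
\item Edges into $V_0$ contribute at most $m\,|V_0|\le \varepsilon n m$ in total, by \textnormal{($ii$)}.
\item For each $j\neq i$, the pair contributes $e_{G'}(V_i,V_j)+e_{G'}(V_j,V_i)$. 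This is $0$ unless $j$ is a neighbour of $i$ in $R$.
\item Since $G'\subseteq G$ is oriented, each pair $(u,w)\in V_i\times V_j$ carries at most one edge, so this contribution is at most $m^2$.
\end{itemize}
Hence
\[
\sum_{x\in V_i} d_{G'}(x)\le \varepsilon n m + d_R(i)\, m^2 .
\]

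\textbf{Concluding.} Comparing the two bounds and dividing by $m^2$ gives
\[
d_R(i) \ge \frac{\delta(G)-(3\varepsilon+2d)n}{m}.
\]
Since $km\le n$ and $|R|=k$, we have $n/m\ge k$. If the numerator is nonnegative, this yields $d_R(i)\ge (\delta(G)/n-3\varepsilon-2d)\,k$. If the numerator is negative, the claimed bound is trivial. Either way $\delta(R)\ge \bigl(\delta(G)/|G|-(3\varepsilon+2d)\bigr)|R|$.

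I do not expect a serious obstacle. The only point needing care is the observation that the orientation choice for double edges of $R'$ is irrelevant, because of the per-pair bound $m^2$ coming from $G$ being oriented. The remaining step is to confirm that the pure-digraph property ensures non-adjacent cluster pairs in $R$ contribute no $G'$-edges at all.
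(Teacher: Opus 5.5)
Your proof is correct, and it gives exactly the constant $3\varepsilon+2d$. The $2(d+\varepsilon)$ comes from adding \textnormal{(iv)} and \textnormal{(v)}, and the extra $\varepsilon$ comes from $V_0$. The per-pair bound $e_{G'}(V_i,V_j)+e_{G'}(V_j,V_i)\le m^2$, valid because $G'\subseteq G$ is oriented, is what makes the orientation chosen for double edges of $R'$ irrelevant. You only need one direction of the pure-digraph remark: a $G'$-edge between $V_i$ and $V_j$ forces $i,j$ to be adjacent in $R'$. That direction holds for every $d$.

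The paper gives no argument of its own and simply cites Kelly, K\"uhn and Osthus, where the lemma is stated for minimum out- and in-degree separately. There a double edge $\{ij,ji\}$ of $R'$ cannot be resolved arbitrarily, since that could destroy all out-edges (or all in-edges) of some cluster. Instead, the standard argument keeps $ij$ at random with probability proportional to $e_{G'}(V_i,V_j)$. It then uses the same $m^2$ bound to show that the expected out- and in-degree of each $i$ is large, and applies a concentration inequality over the $k\ge M'$ clusters.

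Your deterministic route exploits the fact that only total degree is needed here. It is shorter and avoids both the probabilistic step and any need for $k$ to be large, but it gives no semidegree control. It is also the more direct justification of the statement as the paper writes it. The total-degree version is not a formal corollary of the semidegree one: $\delta^+(G)+\delta^-(G)$ can be far below $\delta(G)$ (for instance if $G$ has a sink), and adding the two semidegree bounds would double the error term anyway. Since the paper only ever applies the lemma with total degree, your argument is exactly what is required.
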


The oriented graph \(R\) given by Lemma \ref{dirprop} is called the \emph{reduced oriented graph}. The spanning oriented subgraph \(G^*\) of the pure digraph \(G^\prime\) obtained by deleting all the \(V_i\)-\(V_j\) edges whenever \(V_iV_j \in E(R^\prime) \setminus E(R)\) is called the \emph{pure oriented graph}. Given an oriented subgraph \(S \subseteq R\), the oriented subgraph of \(G^*\) corresponding to \(S\) is the oriented subgraph obtained from \(G^*\) by deleting all those vertices that lie in clusters not belonging to \(S\) as well as deleting all the \(V_i\)-\(V_j\) edges for all pairs \(V_i, V_j\) with \(V_iV_j \notin E(S)\).

The following Blow-up Lemma was proposed by Koml\'{o}s, S\'{a}rk\"{o}zy and Szemer\'{e}di \cite{Komlos1}. We will utilize this lemma to construct a path system with a specific structure that covers almost all vertices in $V(G)$.

\begin{lemma}[{[\citen{Komlos1}] Blow-up Lemma}] \label{blowup}
 For any graph $F$ with the vertex set $[k]$, and any positive numbers $d$ and $\Delta$, there exists $\sigma_0 = \sigma_0(d, \Delta, k)$ such that the following holds for all positive numbers $l_1,\ldots, l_k$ and all $0 < \sigma \leq \sigma_0$. Let $F^\prime$ be the graph obtained from $F$ by replacing each vertex $i\in V(F)$ with a set $V_i$ of $l_i$ new vertices and connecting all vertices between $V_i$ and $V_j$ whenever $ij\in E(F)$. If $G_1$ is a spanning subgraph of $F^\prime$ where every pair $(V_i, V_j )_{G_1}$ is $(\sigma, d)$-super-regular for every edge $ij\in E(F)$, then $G_1$ contains a copy of every subgraph $H$ of $F^\prime$ with $\Delta(H)\leq\Delta$.
 \end{lemma}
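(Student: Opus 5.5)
The plan is to follow the original Koml\'os--S\'ark\"ozy--Szemer\'edi randomized greedy embedding, but to organise it so that no step compares the sizes of two different clusters. Every estimate will be made inside a single pair $(V_i,V_j)$, and it will involve only $d$, $\Delta$, $k$ and the relative sizes $|X|/l_i$, $|Y|/l_j$. Since $\varepsilon$-regularity and the super-regular degree condition $d_{G_1}(v)\ge (d-\sigma)l_j$ are both stated relative to the cluster sizes, this seems the natural way to handle arbitrary $l_1,\dots,l_k$. We may assume that $H$ is spanning in $F'$: otherwise we add isolated vertices, which only makes the task harder, and keep $\Delta(H)\le\Delta$. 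We write $X_i\subseteq V(H)$ for the vertices that must go into $V_i$; thus $|X_i|=l_i$.

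\emph{Step 1 (tiny clusters).} Fix a constant $L=L(d,\Delta,k)$ and call $V_i$ \emph{tiny} if $l_i\le L$. Note that $\sigma_0<1/L$ makes the regularity condition vacuous on a tiny side, since $|X|>\sigma l_i$ then forces $X=V_i$. What survives there is the degree condition, and it says that every vertex of a tiny cluster sees a $(d-\sigma)$-fraction of every neighbouring cluster. I would embed all of $\bigcup_{i\ \mathrm{tiny}}X_i$ first, by a bijection chosen greedily. The images must satisfy the following: for every not-yet-embedded $y\in X_j$ with $V_j$ non-tiny, the common neighbourhood in $V_j$ of the images of $y$'s already embedded neighbours has size at least $(d-2\sigma)^{\Delta}l_j$. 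Two facts make this possible. Each $y$ has at most $\Delta$ neighbours, and at most $k L$ vertices are embedded in this step. Moreover, by $\sigma$-regularity of the non-tiny pairs, all but $\Delta\sigma l_j$ vertices of $V_j$ keep large common neighbourhoods into every further cluster. For tiny--tiny edges of $H$ we use that, for $\sigma_0<1/L$ and $d$ fixed, the pair $(V_i,V_{i'})$ is complete when $l_i,l_{i'}\le L$ and $d-\sigma>1-1/L$. If $d$ is small I would instead embed the tiny--tiny part by a direct Hall argument on the bounded-size graph. This case needs care, and I would check it first.

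\emph{Step 2 (randomized greedy on the non-tiny clusters).} For the remaining clusters, all of size $>L$, I run the KSS algorithm. First set aside in each $X_j$ a buffer $B_j$ of $\mu l_j$ vertices that are pairwise far apart in $H$; these are embedded last. Embed the other vertices one at a time in a fixed order, each to a uniformly random vertex of its current candidate set. The candidate set of $y\in X_j$ is the common $G_1$-neighbourhood in $V_j$ of the images of its embedded neighbours, minus the used vertices. Regularity of each pair shows that candidate sets shrink by at most a factor $(d-\sigma)$ per embedded neighbour, except for $y$ in a $\sqrt{\sigma}$-fraction of ``bad'' cases. The usual martingale concentration argument then shows that, with positive probability, every candidate set keeps size $\ge c(d,\Delta)\,(\text{free part of }V_j)$ and that no cluster's free part is dominated by bad vertices. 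All of these statements are proportional within $V_j$. Concentration needs $l_j$ large, but only as a function of $(d,\Delta,k)$, and this is exactly what the threshold $L$ provides.

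\emph{Step 3 (finishing with Hall), the main obstacle.} In each non-tiny cluster the unembedded vertices are the buffer $B_j$, of size $\mu l_j$, together with a few leftover bad vertices, and exactly the same number of free vertices remain in $V_j$. I then find a perfect matching between these unembedded vertices and the free vertices of $V_j$, where $y$ may be matched to $v$ if $v$ lies in $y$'s candidate set. Since buffer vertices have no neighbours among themselves within distance $2$, these matchings can be chosen independently for different $j$. Hall's condition is verified with the super-regular degree condition for small sets and with $\sigma$-regularity for sets of size $>\sigma l_j$, both measured in units of $l_j$. The delicate point, which I expect to be the main obstacle, is the same as in KSS: the random choices must leave the free vertices of $V_j$ uniformly spread, so that every free vertex lies in many candidate sets. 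With unequal sizes one must check that a huge neighbouring cluster $V_i$ cannot force a small $V_j$ into a skewed state. I would handle this by the KSS trick of deferring, for each $j$, a random $\mu$-fraction of vertices whose neighbourhoods are spread, and by applying the typicality estimates only pairwise. Choosing $\sigma_0\ll\mu\ll 1/L\ll d,1/\Delta,1/k$ then completes the embedding.
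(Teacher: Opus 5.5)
\paragraph{Verdict: genuine gap.} The paper gives no proof of this statement. It quotes the lemma from Koml\'os, S\'ark\"ozy and Szemer\'edi, and in its only application (the almost covering lemma) all clusters have the same size $(1-8\varepsilon)m$, so the arbitrary-$l_i$ generality around which you organise everything is never used. Your outline is the standard randomized-greedy scheme; your addition is the claim that it survives arbitrary $l_i$. That claim is where the gap is.

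\paragraph{Inconsistent hierarchy.} Your choice $\sigma_0\ll\mu\ll 1/L$ is incompatible with Steps~2--3. Any non-tiny cluster with $L<l_j<1/\mu$ gets a buffer of size $\mu l_j<1$. The random greedy must then place the last vertices of $X_j$ with no slack, and nothing is left for the Hall step. The concentration you appeal to lives at the scale of the buffer and needs $\mu l_j$ large, so the threshold must be chosen after $\mu$, i.e.\ $\sigma_0\ll 1/L\ll\mu$.

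\paragraph{Step~3 is only announced.} More seriously, Step~3 is the Blow-up Lemma itself, and you only name it (``the KSS trick \ldots\ applying the typicality estimates only pairwise''). The assertion that every estimate stays proportional inside a single cluster is exactly what needs proof when sizes are very unbalanced. Consider any step that applies $\sigma$-regularity of $(V_j,V_i)$ to a set of images in $V_i$ of size $O(l_j)$, for instance the images of the neighbours of a set of buffer vertices of $X_j$. Such a step is vacuous once $l_i\ge\Delta l_j/\sigma$, and you give no replacement argument.

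\paragraph{Step~1: right conclusion, wrong reason.} A tiny cluster must be filled completely, so there is no freedom to steer its images away from bad vertices. Your remark about ``all but $\Delta\sigma l_j$ vertices of $V_j$'' concerns the other side of the pair. The mechanism that works is that $\sigma l_i<1$ makes singletons admissible test sets; this also settles the small-$d$ tiny--tiny case you left open.
\begin{itemize}
\item Tiny--non-tiny pairs: every $v\in V_i$ satisfies $|N(v)\cap Y|>(d-2\sigma)|Y|$ for all $Y\subseteq V_j$ with $|Y|>\sigma l_j$, using $d(V_i,V_j)\ge d-\sigma$. Iterating, any $s\le\Delta$ vertices of tiny clusters have more than $(d-2\sigma)^{s}l_j$ common neighbours in $V_j$ whenever $(d-2\sigma)^{\Delta}>\sigma$. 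So every bijection onto the tiny clusters is admissible.
\item Tiny--tiny pairs: if $\sigma<1/2$, all $1\times 1$ densities lie within $\sigma$ of $d(V_i,V_{i'})$ and therefore coincide. The degree condition then forces the pair to be complete for every $d>\sigma$. Neither the assumption $d-\sigma>1-1/L$ nor a Hall argument is needed.
\end{itemize}
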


In order to apply  \cref{blowup}, it suffices to verify that the edges of a specific  subgraph in the reduced graph $R$ correspond to $(\varepsilon, d)$-super-regular cluster pairs. This is guaranteed by the following result from \cite{Kelly}.

\begin{proposition}[{[\citen{Kelly}]}]\label{prop:super-regular}
Let \(M^\prime, n_0, D\) be positive numbers and let \(\varepsilon, d\) be positive reals such that \(1/n_0 \ll 1/M^\prime \ll \varepsilon \ll d \ll 1/D\). Let \(G\) be an oriented graph of order at least \(n_0\). Let \(R\) be the reduced oriented graph with parameters \((\varepsilon,d)\) and let \(G^*\) be the pure oriented graph obtained by successively applying first the Diregularity Lemma with \(\varepsilon\), \(d\) and \(M^\prime\) to \(G\). Let \(U\) be an oriented subgraph of \(R\) with \(\Delta(U) \le D\). Let \(G^\prime\) be the underlying graph of \(G^*\). Then one can delete \(2D\varepsilon |V_i|\) vertices from each cluster \(V_i\) to obtain subclusters \(V_i^\prime \subseteq V_i\) in such a way that \(G^\prime\) contains a subgraph \(G^\prime_U\) whose vertex set is the union of \(V_i^\prime\) and such that

\textnormal{($i$)} \(G^\prime_U[V_i^\prime, V_j^\prime]\) is \((\sqrt{\varepsilon}, d-4D\varepsilon)\)-super-regular whenever \(ij \in E(U)\),

\textnormal{($ii$)} \(G^\prime_U[V_i^\prime, V_j^\prime]\) is \(\sqrt{\varepsilon}\)-regular and has density \(d-4D\varepsilon\) whenever \(ij \in E(R)\).

\end{proposition}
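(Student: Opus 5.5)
The plan is the standard "super-regularisation" argument: delete a small number of low-degree vertices from each cluster, then thin the edges randomly to hit the prescribed density. Write $m=|V_i|$ for the common cluster size. Since $R$ is oriented, for each pair $ij\in E(R)$ the graph $G^*$ contains $V_i$–$V_j$ edges in only one direction. Hence the pair $(V_i,V_j)$ in $G'=U(G^*)$ is just the bipartite graph $G^*[V_i,V_j]$ of that direction. By \cref{lem:diregularity}(vii) it is $\varepsilon$-regular with density at least $d$.

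\textbf{Step 1: removing low-degree vertices.} Fix a cluster $V_i$. It has at most $D$ neighbours $j$ in $U$, since $\Delta(U)\le D$. For each such $j$, let $B_{ij}\subseteq V_i$ be the set of vertices with fewer than $(d(V_i,V_j)-\varepsilon)m$ neighbours in $V_j$. By $\varepsilon$-regularity, $|B_{ij}|\le \varepsilon m$; otherwise $B_{ij}$ and $V_j$ would form a pair with density below $d(V_i,V_j)-\varepsilon$. Delete $\bigcup_j B_{ij}$, which has at most $D\varepsilon m$ vertices. Then delete arbitrary further vertices so that exactly $2D\varepsilon m$ vertices are removed from every cluster, and call the result $V_i'$.

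Now take a vertex $x\in V_i'$ and an edge $ij\in E(U)$. It keeps at least $(d(V_i,V_j)-\varepsilon)m-2D\varepsilon m$ neighbours in $V_j'$. For regularity, any $X\subseteq V_i'$ and $Y\subseteq V_j'$ with $|X|>\sqrt\varepsilon|V_i'|$ and $|Y|>\sqrt\varepsilon|V_j'|$ satisfy $|X|>\varepsilon m$ and $|Y|>\varepsilon m$. So $d(X,Y)$, and also $d(V_i',V_j')$, lie within $\varepsilon$ of $d(V_i,V_j)$. Thus every pair $(V_i',V_j')$ with $ij\in E(R)$ is $2\varepsilon$-regular, and hence $\sqrt\varepsilon$-regular. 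Its density is at least $d-\varepsilon$, and for $ij\in E(U)$ the minimum degrees are at least $(d(V_i',V_j')-\varepsilon-3D\varepsilon)|V_j'|$.

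\textbf{Step 2: thinning to exact density.} To obtain density exactly $d-4D\varepsilon$, keep each edge of $G'[V_i',V_j']$ independently with probability $p=(d-4D\varepsilon)/d(V_i',V_j')\le 1$, and then adjust by $o(m^2)$ edges. Call the resulting graph $G'_U$.

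By Chernoff bounds, with probability $1-e^{-\Omega(m)}$ every vertex degree is multiplied by $p\pm\varepsilon$. This keeps the degree into $V_j'$ above $(d-4D\varepsilon-\sqrt\varepsilon)|V_j'|$ for $ij\in E(U)$, so super-regularity holds with the paper's convention $d_G(a)\ge (d-\varepsilon)|B|$, read with $d-4D\varepsilon$ and $\sqrt\varepsilon$. Similarly, for a fixed pair $X,Y$ of size at least $\sqrt\varepsilon m$, the edge count $e(X,Y)$ concentrates within $\varepsilon|X||Y|$ of its mean with failure probability $\exp(-\Omega(\varepsilon^3m^2))$. This beats the $4^{m}$ choices of $(X,Y)$, so a union bound gives that, with positive probability, all pairs stay $\sqrt\varepsilon$-regular and all super-regularity degree conditions hold. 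Both (i) and (ii) then follow, using $\varepsilon\ll d\ll 1/D$ to absorb the constants and $1/n_0\ll 1/M'$ to make $m$ large.

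The main obstacle is Step 2, where exact density, preserved regularity and preserved minimum degrees must hold at once. The key point is that the concentration for large sets, of order $e^{-\Omega(m^2)}$, dominates the exponential number of set pairs. If the exact-density requirement is only meant as "at least $d-4D\varepsilon$", this step can be skipped and Step 1 alone suffices.
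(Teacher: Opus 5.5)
Your proof is correct. Step 1 is essentially the standard argument: the paper gives no proof of its own but cites Kelly--K\"uhn--Osthus, where one likewise deletes at most $\varepsilon|V_i|$ low-degree vertices for each $U$-neighbour, pads to $2D\varepsilon|V_i|$ deletions, and uses that regularity passes to large subpairs; there, (ii) asks only for density greater than $d-4D\varepsilon$, so Step 1 alone suffices. Your Step 2 only addresses the literal exact-density reading, and it is fine provided the final correction of $o(m^2)$ edges is spread out so that no vertex loses more than $o(m)$ neighbours, which is what keeps the super-regularity degree bound intact.
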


To embed a specified small graph across clusters with sufficient density and regularity, we rely on the following lemma.

\begin{lemma}[{[\citen{Komlos4}]}]\label{counting lemma}
For every \( d \) and \( h \) there exist \( \varepsilon = \varepsilon(d, h) \) and \( c = c(d, h) \), so that if all pairs of clusters among \( V_1, \ldots, V_h \) are of size at least \( c \), of density at least \( d \) and are \( \varepsilon \)-regular, then \( V_1, \ldots, V_h \) contain a copy of any graph \( H \) on \( h \) vertices (with one vertex in each cluster \( V_i \)).
\end{lemma}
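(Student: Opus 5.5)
The statement to prove is the counting lemma, \cref{counting lemma}. The plan is the standard embedding argument: embed the vertices of $H$ one at a time, keeping for each unembedded vertex a large candidate set.

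Label the vertices of $H$ as $1,\dots,h$, with vertex $i$ to be placed in $V_i$. Choose $\varepsilon \ll d$ so that $(d-\varepsilon)^{h}/2 > \varepsilon$, and choose $c$ so large that $\varepsilon (d-\varepsilon)^{h} c \ge h$. Throughout, vertices $x_1,\dots,x_{i-1}$ have already been chosen. For each $j\ge i$, let $C_j^{(i)}\subseteq V_j$ be the set of vertices of $V_j$ adjacent to every $x_\ell$ with $\ell<i$ and $\ell j\in E(H)$. We maintain the invariant
\[
|C_j^{(i)}|\ge (d-\varepsilon)^{|\{\ell<i:\ \ell j\in E(H)\}|}\,|V_j| \qquad \text{for all } j\ge i.
\]

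At step $i$ we pick $x_i\in C_i^{(i)}$ so that, for every later neighbour $j>i$ of $i$ in $H$, we have
\[
|N(x_i)\cap C_j^{(i)}|\ge (d-\varepsilon)|C_j^{(i)}|.
\]
Since $|C_j^{(i)}|\ge (d-\varepsilon)^{h}|V_j|>\varepsilon|V_j|$, $\varepsilon$-regularity of $(V_i,V_j)$ applies to this set. It shows that fewer than $\varepsilon|V_i|$ vertices of $V_i$ have fewer than $(d-\varepsilon)|C_j^{(i)}|$ neighbours in $C_j^{(i)}$. Otherwise the set $X$ of such bad vertices and $Y=C_j^{(i)}$ would satisfy $d(X,Y)<d-\varepsilon\le d(V_i,V_j)-\varepsilon$, contradicting regularity.

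Summing over the at most $h$ later neighbours, at most $h\varepsilon|V_i|$ vertices of $C_i^{(i)}$ are bad. Since $|C_i^{(i)}|\ge (d-\varepsilon)^{h}|V_i| > h\varepsilon |V_i|$ once $\varepsilon\ll d,1/h$, a good choice $x_i$ exists. Distinctness of the $x_i$ is automatic because they lie in different clusters. The size hypothesis $|V_i|\ge c$ only ensures that these sets are nonempty and that $\varepsilon|V_j|$ is a meaningful threshold.

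Updating $C_j^{(i+1)}=C_j^{(i)}\cap N(x_i)$ for the neighbours $j$ of $i$ preserves the invariant, and the induction finishes after $h$ steps with a copy of $H$. The only delicate point is fixing the constants so that the candidate sets never drop below the $\varepsilon$-threshold. Concretely, we need $\varepsilon<(d-\varepsilon)^{h}/(h+1)$, which is where the dependence $\varepsilon=\varepsilon(d,h)$ arises.

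If the lemma is read with oriented edges, as in the paper's setting, the same argument works verbatim. For each edge $ij$ we use the regular pair in the required direction, and "neighbour" means out- or in-neighbour accordingly.
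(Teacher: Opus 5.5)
Your proof is correct. The paper gives no proof of its own: it states the lemma with a citation to Koml\'{o}s--Simonovits, and your greedy candidate-set embedding is the standard argument behind their Key Lemma, so you are following essentially the same route.

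One bookkeeping slip: your opening choice $(d-\varepsilon)^h/2>\varepsilon$ is weaker than the condition $(d-\varepsilon)^h>h\varepsilon$ you actually need, though you state the correct requirement at the end. Two minor points: with one vertex per cluster the size bound $c$ plays no role, and in the oriented reading only orientations of $H$ whose arcs follow dense directed pairs can be embedded, which is all the paper's $T_6$ application needs.
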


 \subsection{Local Version of Hajnal-Szemer\'{e}di Theorem} \label{subsec:HSthm}
In this subsection, we record a recent refinement of the Hajnal--Szemer\'{e}di theorem due to Balogh et al.~\cite{Balogh}. This local version of the Hajnal--Szemer\'{e}di theorem allows us to partition the oriented graph into tournaments of size $r-1$ and $r$ with high precision, and serves as a key structural ingredient to our proof of \cref{th1}.

For $r \in \mathbb{N}$, we define the following parameters in an $n$-vertex oriented graph $G$:
\[
A_r := (r-1) \delta(G) - (r-2)n \quad \text{and} \quad \overline{A}_r := (r-1)n - r \delta(G).
\]
It is easy to check that if $(1 - 1/(r-1))n \le \delta(G) \le (1 - 1/r)n$, then $r A_r + (r-1) \overline{A}_r = n$.

\begin{lemma}[{[\citen{Balogh}]}] \label{HSnew}
Let $n, r \in \mathbb{N}$ and let $G$ be a graph on $n$ vertices. If
\[
\left(1 - \frac{1}{r-1}\right)n \le \delta(G) \le \left(1 - \frac{1}{r}\right)n,
\]
then $G$ contains a $K_r$-tiling consisting of $A_r$ copies of $K_r$ and a $K_{r-1}$-tiling consisting of $\overline{A}_r$ copies of $K_{r-1}$, such that the two tilings are vertex-disjoint.
\end{lemma}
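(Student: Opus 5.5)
This is a cited result, the local Hajnal--Szemerédi theorem of Balogh et al., so the plan is to outline how its proof goes. We may assume the classical Hajnal--Szemerédi theorem: a graph on $N$ vertices with $\delta\ge(1-1/r)N$ and $r\mid N$ has a $K_r$-factor.

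\textbf{Reduction by adding dummy vertices.} Write $\delta=\delta(G)$.
\begin{enumerate}
\item Add a set $D$ of $s:=\overline{A}_r$ new vertices. Each is joined to every original vertex and to every other vertex of $D$, giving a graph $G'$ on $N=n+s$ vertices.
\item \emph{Degree of an original vertex.} Its degree in $G'$ is at least $\delta+s$.
\item \emph{Degree of a dummy vertex.} Its degree is $N-1$.
\item \emph{Divisibility.} From the identity $rA_r+(r-1)\overline{A}_r=n$ we get $N=n+\overline{A}_r=r(A_r+\overline{A}_r)$, so $r\mid N$.
\item \emph{Degree threshold.} We need $\delta+s\ge(1-1/r)N$. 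Expanding,
\[
\delta+(r-1)n-r\delta \ \ge\ \tfrac{r-1}{r}\bigl(rn-(r-1)\delta\bigr).
\]
Multiplying by $r$ and simplifying, this reduces to $0\ge 0$. So it holds with equality.
\end{enumerate}
Hajnal--Szemerédi therefore gives a $K_r$-factor of $G'$ with $A_r+\overline{A}_r$ copies of $K_r$.

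\textbf{The obstacle.} Removing the dummy vertices yields the desired tiling only if each $K_r$ contains at most one dummy vertex. Then $\overline{A}_r$ cliques each lose one vertex and become $K_{r-1}$'s, while $A_r$ cliques remain $K_r$. But $D$ is a clique, so the factor could place several dummies in one $K_r$.

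\textbf{Fix 1: make $D$ independent.} If instead $D$ is independent, a clique cannot contain two dummies. However, a dummy vertex then has degree only $n$, and we need
\[
n\ \ge\ (1-1/r)(n+s).
\]
This is equivalent to $n\ge (r-1)s$, i.e.\ $n\ge (r-1)\overline{A}_r$, which follows from $rA_r\ge0$. So the construction works, except that Hajnal--Szemerédi requires the degree bound for all vertices simultaneously. The dummies satisfy it by the computation just made, and original vertices have degree $\ge\delta+s=(1-1/r)N$ exactly as before. Hence both conditions hold.

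\textbf{Fix 2: alternative if a subtlety arises.} If the above fails, one can instead use an exchange argument: among $K_r$-factors of the clique-$D$ version, pick one minimizing the number of cliques containing $\ge2$ dummies. Then swap vertices using the degree condition, in the spirit of the Kierstead--Kostochka proof.

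With $D$ independent, the dummies lie in distinct cliques, and deleting them yields the two vertex-disjoint tilings with exactly $A_r$ copies of $K_r$ and $\overline{A}_r$ copies of $K_{r-1}$. The main point to check carefully is the degree arithmetic for both vertex types. Edge cases ($A_r=0$ or $\overline{A}_r=0$) are handled directly by Hajnal--Szemerédi.
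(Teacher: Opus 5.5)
Your Fix 1 argument is correct and complete, given the classical Hajnal--Szemer\'edi theorem. Take $D$ to be an independent set of $s=\overline{A}_r\ge 0$ new vertices joined to all of $V(G)$. Then $N=n+s=r(A_r+\overline{A}_r)=r(n-\delta)$, and each original vertex has degree at least $\delta+s=(r-1)(n-\delta)=(1-1/r)N$. Each dummy has degree $n\ge (r-1)(n-\delta)$, and this holds exactly because $\delta\ge(1-\frac{1}{r-1})n$. Independence of $D$ then places the $\overline{A}_r$ dummies in distinct copies of $K_r$. Each hypothesis on $\delta$ is used once: the upper bound gives $s\ge 0$, and the lower bound gives the dummy degree.

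The paper does not prove this lemma; it only cites Balogh et al. What you have rebuilt is the standard reduction showing that the ``$r\mid N$'' form of Hajnal--Szemer\'edi implies its equitable-colouring form. In fact the lemma follows immediately from the equitable-colouring form. The complement $\overline{G}$ has $\Delta(\overline{G})=n-1-\delta$, so it has an equitable colouring with $k=n-\delta$ colours. Its classes are cliques of $G$ of size $\lfloor n/k\rfloor$ or $\lceil n/k\rceil$, and these sizes lie in $\{r-1,r\}$ since $n/k\in[r-1,r]$. Solving $a+b=n-\delta$ and $ra+(r-1)b=n$ gives $a=A_r$ and $b=\overline{A}_r$.

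There are two things to clean up.

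\begin{itemize}
\item In step 5 you wrote $N=rn-(r-1)\delta$, but the correct value, as your step 4 shows, is $N=n+\overline{A}_r=r(n-\delta)$. As displayed, your inequality is false whenever $\delta>0$, since it reduces to $r\le r-1$. With the correct $N$, both sides equal $(r-1)(n-\delta)$.
\item Drop the clique-$D$ attempt and ``Fix 2'' entirely. Fix 2 is too vague to count as an argument, and Fix 1 makes it unnecessary.
\end{itemize}
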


By considering the underlying graph $U(G)$ of an oriented graph $G$, we immediately obtain the following directed analogue.

\begin{corollary}\label{HSnew1}
Let $n, r \in \mathbb{N}$ and let $G$ be an oriented graph on $n$ vertices. If
\[
\left(1 - \frac{1}{r-1}\right)n \le \delta(G) \le \left(1 - \frac{1}{r}\right)n,
\]
then $G$ contains a $T_r$-tiling of size $A_r$ and a $T_{r-1}$-tiling of size $\overline{A}_r$, which are vertex-disjoint.
\end{corollary}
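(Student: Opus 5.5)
The statement to prove is \cref{HSnew1}, which is immediate from \cref{HSnew} applied to the underlying graph. The plan is as follows.

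Let $G$ be an oriented graph on $n$ vertices satisfying the degree hypothesis, and consider its underlying graph $U(G)$. Since $G$ is oriented, between any two vertices there is at most one edge. So forgetting directions gives a simple graph on the same vertex set in which every vertex $v$ has degree $d_{U(G)}(v)=d^+_G(v)+d^-_G(v)=d_G(v)$. Hence $\delta(U(G))=\delta(G)$, and $U(G)$ satisfies
\[
\left(1-\frac{1}{r-1}\right)n\le \delta(U(G))\le \left(1-\frac1r\right)n .
\]
The parameters $A_r$ and $\overline{A}_r$ depend only on $n$, $r$ and the minimum degree, so they are the same whether computed for $G$ or for $U(G)$.

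Apply \cref{HSnew} to $U(G)$. This gives vertex-disjoint families consisting of $A_r$ copies of $K_r$ and $\overline{A}_r$ copies of $K_{r-1}$, all pairwise vertex-disjoint.

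Each such clique lifts back to $G$. If $S\subseteq V(G)$ spans a $K_s$ in $U(G)$, then every pair of vertices of $S$ is joined by exactly one edge of $G$, so $G[S]$ is an orientation of $K_s$, that is, a tournament $T_s\in\mathcal{T}_s$. Doing this for every clique in the two tilings gives a $T_r$-tiling of size $A_r$ and a $T_{r-1}$-tiling of size $\overline{A}_r$ in $G$. These are vertex-disjoint because the underlying vertex sets are unchanged.

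There is no real obstacle here. The only points to check are the equality $d_{U(G)}(v)=d_G(v)$, which uses that $G$ has no $2$-cycles, and the interpretation of ``$T_r$-tiling'' as a tiling by arbitrary tournaments on $r$ vertices, possibly non-isomorphic to one another. If the notation were meant as copies of one fixed tournament, the statement would be false in general. So I would state explicitly that each tile is some member of $\mathcal{T}_r$ (respectively $\mathcal{T}_{r-1}$).
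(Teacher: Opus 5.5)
Your proof is correct and is essentially the paper's own argument: the paper obtains \cref{HSnew1} in one line by applying \cref{HSnew} to the underlying graph $U(G)$, which has the same minimum degree because $G$ has no $2$-cycles, and whose cliques are tournaments in $G$. Your clarification that the tiles are arbitrary members of $\mathcal{T}_r$ and $\mathcal{T}_{r-1}$, rather than copies of one fixed tournament, is worth stating explicitly; it also matches how the paper later uses the tiling through the min--max quantity $N_r$.
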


\section{Almost covering, absorbing, and connecting Lemmas for Theorem~\ref{th1} } \label{sec:cac}
In this section, we  establish the key lemmas for the proof of \cref{th1}. To this end, we modify the original hypotheses by replacing degree conditions for graphs with the corresponding degree conditions for oriented graphs, while disregarding orientation in the absorption and connection steps. Under these modified assumptions, these arguments of~\cite{Levitt} remain valid. Throughout this section, we assume that \(1/n \ll \alpha \ll 1\); we do not repeat this hierarchy in the statements below.

\subsection{ Almost Covering Lemma.}
Before presenting our covering lemma, we first need some definitions and lemmas.

\begin{definition}\label{hk}

   For a positive integer $n$, a \textit{Hamilton square coupling} $S$ of a tournament $T_n$ is defined as follows. When $n \ge 5$, $S$ is the square of a Hamilton cycle. When $n = 3$ or $n = 4$, let $P = v_1^1 \dots v_n^1 v_1^2 v_2^2$ be a $2$-path in $T_n(2)$ and set $S = P - v_1^2 v_2^2.$ Moreover, let $\sigma_{\max}(T_i):=\sigma_{\max}(S)$, for $i=3$ or $4$.
   
\end{definition}

To further characterize the structural properties of tournaments, we introduce the following definition.

\begin{definition}\label{unav}
 For each $T_n\in\mathcal T_n$, let $\mathcal{S}_n$ be the family of all Hamilton square couplings of $T_n$. Define $$N_n:=\min_{T_n \in\mathcal T_n}\ \max_{S\in \mathcal{S}_n}\sigma_{\max}(S).$$
Equivalently, $N_n$ is the largest integer $k$ such that every tournament on $n$ vertices contains a Hamilton square coupling $S$ satisfying $\sigma_{\max}(S)\geq k$.
We also set $M_n = 2n - N_n$.
\end{definition}

We now establish the following properties of the parameters $N_n$ and $M_n$.

 \begin{lemma}[{[\citen{Draganic2}]}]\label{longpath}
 For \(n \ge 1\), every \(n\)-vertex tournament contains the square of a directed path of length $\ell=\lceil 2n/3\rceil-1$, but not necessarily of length \(\ell+1\).
 \end{lemma}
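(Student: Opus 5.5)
The plan is to prove the lower bound by strong induction on $n$, in the form ``every $n$-vertex tournament $T$ contains the square of a directed path on at least $\lceil 2n/3\rceil$ vertices.'' Here the square of a directed path $u_1u_2\dots u_s$ means $u_iu_{i+1},u_iu_{i+2}\in E(T)$ for all relevant $i$, so every three consecutive vertices induce a transitive triangle in the order of the path. The target is to remove three vertices while losing at most two vertices of the guaranteed path, which is exactly what $\lceil 2n/3\rceil$ permits. The base cases $n\le 3$ are trivial: a single edge is always present, and for $n=3$ we need only $\lceil 2\rceil=2$ vertices.

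For the inductive step I would strengthen the hypothesis so that the path can be extended at its start. One option is to require that the first vertex of the path is a vertex of maximum out-degree. Another is to work in a median order $v_1,\dots,v_n$, i.e.\ one maximising forward edges, whose feedback property says that each $v_i$ beats at least half of $v_{i+1},\dots,v_j$ for every $j>i$. Concretely, I would pick a vertex $x$ and a transitive triangle $x\to y\to z$, $x\to z$, with $\{y,z\}$ dominating a large part of the rest. I would then apply induction to $T'=T\setminus\{x,y,z\}$ restricted to the common out-neighbourhood structure, and prepend $x,y$ (or $x,y,z$) to the path found in $T'$. The required edges are $y\to u_1$, $y\to u_2$ and $x\to u_1$.

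The main obstacle is precisely this gluing: the path obtained in $T'$ need not start inside $N^+(x)\cap N^+(y)$. I would handle it with a case analysis on the orientation of the edges between $\{x,y,z\}$ and the first two vertices $u_1,u_2$. If the prepending fails, then some vertex among $u_1,u_2$ beats $y$ or $x$. In that case I would instead reroute, either by inserting that vertex earlier or by swapping roles inside the triangle, and use the maximality of the median order (or of the out-degree) to derive a contradiction whenever no rerouting works. Controlling this while keeping the net loss at two vertices per three removed is where the real work lies.

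For tightness (``not necessarily of length $\ell+1$'') I would exhibit a recursive construction: an iterated blow-up of the cyclic triangle, with suitably chosen tournaments inside the parts. The key observation is that any three consecutive vertices of a square path span a transitive triangle, so the path cannot take one vertex from each of three cyclically oriented parts. The internal structure must then be chosen so that the path is forced to skip roughly one third of the vertices. A naive choice of transitive parts does not suffice: the path can then run through $A$, then $B$, then $C$ in turn and cover everything. So the parts have to be defined recursively, and I would verify the bound $\lceil 2n/3\rceil$ by induction on the recursion depth.
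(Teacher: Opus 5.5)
The paper does not prove this lemma; it quotes it from Draganić et al. So your argument has to stand on its own, and both halves have a genuine gap.

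\textbf{Lower bound.} The inductive step is never actually carried out. Prepending $x,y$ to a square path $u_1u_2\dots$ of $T'$ needs $x\to u_1$, $y\to u_1$ and $y\to u_2$. That is, you need control over the first \emph{two} vertices of the recursively obtained path, and you never fix an induction hypothesis that supplies this. ``The first vertex has maximum out-degree'' says nothing about $u_2$, nor does it put $u_1$ in $N^+(x)\cap N^+(y)$. You also never check that either proposed strengthening survives the passage to $T'$.

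Restricting $T'$ to $N^+(x)\cap N^+(y)$ would make the gluing automatic. But then every vertex outside that common out-neighbourhood is discarded, not just $z$, and the count $\lceil 2(n-3)/3\rceil+2=\lceil 2n/3\rceil$ is lost.

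The ``rerouting or contradiction with maximality'' step is exactly the content of the theorem, and local extension from a fixed pair of end vertices can genuinely get stuck, even in a median order. Take the median order $v_0,\dots,v_4$ whose only backward edges are $v_2v_0$ and $v_4v_1$. It is median because the directed triangles $v_0v_1v_2$, $v_1v_2v_4$, $v_1v_3v_4$ share no edge, so every order has at least two backward edges. A square path ending in $v_0v_1$ can only continue inside this window with $v_3$, and is then stuck. Yet $v_1v_2v_3v_4$ is a square path. So a working invariant must allow earlier choices to be revised, and your sketch supplies none.

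\textbf{Tightness.} The proposed construction fails for a structural reason, whatever you put inside the parts. In a blow-up of a cyclic triangle $A\to B\to C\to A$, no three consecutive path vertices meet all three parts. However, the path can wind around $A,B,C,A,\dots$ provided each visit uses at least two vertices. If the parts have equal even size, split each into edges $a_ia_i'$, $b_ib_i'$, $c_ic_i'$; then $a_1a_1'b_1b_1'c_1c_1'a_2a_2'\cdots$ is the square of a Hamilton path.

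Your iterated version already fails at $n=9$. Take the parts to be cyclic triangles $a_1\to a_2\to a_3\to a_1$ and similarly in $B$ and $C$. Then $c_3a_1a_2b_1b_2c_1c_2a_3$ is a square path on $8>6$ vertices.

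The correct example reverses the roles of the two levels. Take vertex-disjoint cyclic triangles $D_1,\dots,D_m$, with every edge between $D_i$ and $D_j$ directed from $D_i$ to $D_j$ for $i<j$. If $3\nmid n$, append one or two further vertices at the end of this transitive order. A directed path can never return to an earlier $D_i$, so its vertices in each $D_i$ are consecutive on the path. Since any three consecutive vertices of a square path span a transitive triangle, the path uses at most two vertices of each $D_i$. Hence every square path has at most $\lceil 2n/3\rceil$ vertices, i.e.\ length at most $\ell$.
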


\begin{proposition}\label{Cr}
If $ n=3,4 $ or $5$,  $M_n= 3$ and $N_n=2n-3.$
\end{proposition}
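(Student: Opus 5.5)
We treat the three values of $n$ separately. In each case $S$ has exactly $2n$ edges: for $n=5$, $S$ is the square of a Hamilton cycle on $5$ vertices, which uses all $\binom52=10$ pairs. So $N_n=2n-3$ and $M_n=3$ are equivalent, and it suffices to show two things: every $T_n$ has a Hamilton square coupling with at most $3$ edges in its minor direction, and some $T_n$ has no coupling with fewer than $3$ edges in its minor direction. Throughout, an edge of $S$ between copies $v_i^a,v_j^b$ in $T_n(2)$ is read as the edge $v_iv_j$ of $T_n$, and it counts as forward or backward according to its position along $P$.

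\textbf{The case $n=3$.} Write down the six edges of $S=P-v_1^2v_2^2$ for $P=v_1^1v_2^1v_3^1v_1^2v_2^2$. Each pair $\{v_i,v_j\}$ of $T_3$ occurs exactly twice in $S$: once traversed from the lower to the higher index along $P$, and once in the opposite sense (for example $v_1^1v_2^1$ and $v_2^1v_1^2$). Hence every pair contributes exactly one forward and one backward edge, for every tournament and every labelling. This gives $\sigma^+(S)=\sigma^-(S)=3$, so $N_3=3$ and $M_3=3$.

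\textbf{The case $n=4$.} The same bookkeeping shows that the pairs $\{1,2\},\{1,3\},\{2,3\},\{2,4\}$ appear twice with opposite senses, so they contribute $4$ to each direction. The remaining two edges are $v_3v_4$, traversed forward, and $v_4v_1$ (from $v_4^1$ to $v_1^2$), also traversed forward. Therefore $\sigma_{\max}(S)=4+\max\{a,2-a\}\le 5$, where $a$ counts how many of the two edges $v_3v_4$, $v_4v_1$ of $T_4$ are oriented consistently with $3\to4\to1$. This already gives $\sigma_{\max}(S)\le 5$ for every tournament, which is the upper bound. For the lower bound, every $T_4$ contains a directed path $xyz$: take any vertex with both an in- and an out-neighbour, which exists since not every vertex is a source or a sink. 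Label $v_3=x$, $v_4=y$, $v_1=z$, and let $v_2$ be the remaining vertex. Then $a=2$, so $\sigma_{\max}(S)=5$ and $N_4=5$.

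\textbf{The case $n=5$.} Here $S$ is a cyclic order of $V(T_5)$, and we need two facts.

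\emph{Lower bound.} We must show that every $T_5$ admits a cyclic order with at most $3$ edges in one direction. By \cref{longpath} with $\lceil 10/3\rceil-1=3$, $T_5$ contains the square of a directed path $v_1v_2v_3v_4$, giving $5$ forward edges whichever position the fifth vertex $w$ takes. We then choose where to insert $w$ into the cycle. Placing $w$ after $v_4$, the remaining edges are $v_3w,v_4w,wv_1,wv_2,v_4v_1$; other insertion positions, and reversing the path, give other quintuples. We need one position in which at least $2$ of the $5$ remaining edges agree with the traversal direction. This is a small case analysis on the in/out pattern of $w$ towards $v_1,\dots,v_4$ and on the orientation of $v_1v_4$. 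For instance, if $w$ has an in-neighbour among $\{v_3,v_4\}$ and an out-neighbour among $\{v_1,v_2\}$, we are already done with $w$ after $v_4$. If $w$ dominates or is dominated by most of the path, we instead place $w$ at the start of the cycle (before $v_1$) or in the middle.

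\emph{Upper bound.} We take the rotational regular tournament on $\mathbb{Z}_5$ with $i\to i+1, i+2$. Using its vertex-transitivity and the symmetry $i\mapsto -i$ (which reverses all edges), only a few cyclic orders need checking up to symmetry, and we verify that each has at least $3$ edges in each direction. For example, the identity order gives $10/0$, which is fine for the lower bound but not relevant here; the order $0,2,4,1,3$ gives $5/5$. The cleaner route is a counting argument: each vertex has out-degree $2$, and in a cyclic order with at most $2$ backward edges some vertex would need to be a local source or sink among its four cycle-neighbours, which is impossible in a regular $T_5$.

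I expect the $n=5$ case to be the main obstacle. On the lower-bound side, the insertion-position case analysis must be organised to be exhaustive. On the upper-bound side, the claimed extremal tournament must actually force $3$ edges in the minor direction. If the regular tournament fails, I would search the remaining $T_5$'s (there are only $12$ up to isomorphism) for one that works.
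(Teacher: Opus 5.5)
Your $n=3$ case is correct, but the $n=4$ bookkeeping is wrong. In $P=v_1^1v_2^1v_3^1v_4^1v_1^2v_2^2$ the pairs $\{v_1,v_2\}$ and $\{v_2,v_3\}$ occur only once: the second copy of $\{v_1,v_2\}$ is the deleted edge $v_1^2v_2^2$, and $v_3^1,v_2^2$ are at distance $3$. Only $\{v_1,v_3\}$ and $\{v_2,v_4\}$ occur twice, and your count would give $S$ ten edges rather than eight. The correct formula is $\sigma_{\max}(S)=2+\max\{a,4-a\}$, where $a$ is the number of edges of the cyclic order $v_1v_2v_3v_4v_1$ oriented along it. So $\sigma_{\max}(S)=6$ whenever the labelling follows a directed Hamilton cycle, and every strongly connected $T_4$ has one. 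Your claim that $\sigma_{\max}(S)\le 5$ for every tournament is therefore false; your own formula already gives $6$ at $a=2$, which contradicts your next step. Showing $N_4\le 5$ requires exhibiting one tournament with no directed Hamilton cycle, e.g.\ the transitive tournament on four vertices, where $1\le a\le 3$ for every labelling. Showing $N_4\ge 5$ requires $a\ge 3$, i.e.\ a directed Hamilton path $v_1v_2v_3v_4$ as in the paper; a directed path $xyz$ only guarantees $a\ge 2$.

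For $n=5$ the upper-bound witness fails outright. The rotational tournament $i\to i+1,i+2$ is exactly the square of a directed $5$-cycle, so the identity order has all $10$ edges forward. That example refutes your claim rather than being irrelevant to it, and your local source/sink counting argument is false for the same reason. Use the transitive tournament on five vertices instead. In any cyclic order, both traversals into the source and both traversals out of the sink are backward, and these share at most the one traversal of the source--sink pair, so at least $3$ edges are backward; symmetrically at least $3$ are forward.

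For $M_5\le 3$, your route via \cref{longpath} is a legitimate hand alternative to the paper's computer check, but the case analysis is missing. Also, placing $w$ before $v_1$ gives the same cyclic order as placing it after $v_4$. The analysis does close:
\begin{itemize}
\item The order $v_1v_2v_3v_4w$ has $5$ forward edges plus one for each true statement among $v_3\to w$, $v_4\to w$, $w\to v_1$, $w\to v_2$, $v_4\to v_1$.
\item If $v_4\to v_1$, this fails only when $v_1,v_2\to w\to v_3,v_4$, and then $v_1v_2wv_3v_4$ has $8$ forward edges.
\item If $v_1\to v_4$, each failing configuration is of one of three kinds. Either $T_5$ is transitive, and the transitive order gives $7$. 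Or it is a pair $a\to b$ dominating a directed triangle $p\to q\to s\to p$, where the order $a\,p\,b\,q\,s$ has $7$ forward edges. Or it is the reverse of the latter, where the same order has $7$ backward edges.
\end{itemize}
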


\begin{proof}
 For the two non-isomorphic tournaments on 3 vertices, it is clear that $M_3 = 3$. For $n=4$, let $v_1v_2v_3v_4$ be a directed Hamilton path, and let $H$ be the Hamilton square coupling of $T_4$. By the construction of $H$, the edges in the two pairs $\{v_2^1v_3^2, v_3^1v_2^2\}$ and $\{v_2^1v_4^2, v_4^1v_2^2\}$ have opposite directions, so $H$ contains at least 2 edges with non-uniform orientation. Furthermore, the direction of the edge $v_4^1v_1^2$ may introduce additional minor direction  in $H$, which implies $M_4 = 3$. For $n=5$, we verify the result by computer, and the code is provided in Appendix \ref{appendix}.
\end{proof}

We are now in a position to prove our almost covering lemma.

\begin{lemma}[Almost covering lemma]\label{TH1}
    Suppose that  $0<1/n \ll 1/M \ll \varepsilon \ll d \ll\alpha\ll1.$  Let  $G$ be an  $n$-vertex oriented graph satisfying $\delta(G)\ge(2/3 + \alpha)n$. Then there exist a function $g(\delta(G),n)>n$ and a family of vertex-disjoint $2$-paths, denoted by $\mathcal{Q}$, covering all but at most $9\varepsilon n$ vertices. Furthermore, $\mathcal{Q}$ has the property that:
    \[
\sigma_{\max}(\mathcal{Q})\ge g(\delta(G),n)-O(d) n.
\]
\end{lemma}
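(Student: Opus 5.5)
We apply \cref{lem:diregularity} to $G$ with parameters $\varepsilon, d, M'$ and pass to the reduced oriented graph $R$ given by \cref{dirprop}. Writing $k=|R|$, we have $\delta(R)\ge(\delta(G)/n-3\varepsilon-2d)k\ge (2/3+\alpha/2)k$. Let $r\ge 4$ be the integer with $(1-1/(r-1))k\le \delta(R)\le (1-1/r)k$; since $\delta(R)>2k/3$, such an $r$ exists. By \cref{HSnew1}, $R$ contains vertex-disjoint tilings consisting of $A_r(R)$ copies of $T_r$ and $\overline{A}_r(R)$ copies of $T_{r-1}$, and together these cover $V(R)$ exactly.

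For each tile $T$ of order $s\in\{r-1,r\}$, we fix a Hamilton square coupling $S_T$ with $\sigma_{\max}(S_T)\ge N_s$, as in \cref{unav}. We use $S_T$ as a template for a long $2$-path winding through the $s$ clusters of $T$. When $s\ge5$, $S_T$ is the square of a Hamilton cycle on $V(T)$. We unroll it into the square of a path visiting the clusters $V_{t_1},\dots,V_{t_s},V_{t_1},\dots$ cyclically, $m'$ times, and then cut it to length $sm'$. The resulting $2$-path lies in the blow-up of $T$ and has maximum degree at most $4$. The case $s\in\{3,4\}$ is handled the same way using the structure of \cref{hk}. The path $v_1^1\dots v_s^1v_1^2v_2^2$ in $T(2)$ repeats periodically. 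In every period exactly the edges of $S$ are used, since the deleted edge $v_1^2v_2^2$ is the first edge of the next period.

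Next we apply \cref{prop:super-regular} with $U$ equal to the union of the tiles, which has $\Delta(U)\le r-1\le D$. This yields subclusters $V_i'$ of common size $m'\ge(1-2D\varepsilon)m$, and every pair corresponding to an edge of $U$ is super-regular. \cref{blowup} then embeds in each tile's subclusters a $2$-path that follows the periodic template and covers them entirely. The vertices left uncovered are those in $V_0$ and those deleted from the clusters, at most $\varepsilon n+2D\varepsilon n\le 9\varepsilon n$ for an appropriate choice of constants. (If $D$ depends on $r$, we instead fix $r\le 1/\alpha$ before choosing $\varepsilon$, which keeps the bound controlled.)

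To count discrepancy, observe that the $2$-path of a tile of order $s$ has about $2sm'$ edges. Each period of $s$ vertices contributes $2s$ edges, of which at least $N_s$ point in the dominant direction of $S_T$. The $2$-path therefore has $\sigma_{\max}\ge N_s m'-O(1)$. Summing over tiles and using $m'k\approx n$, we get
\[
\sigma_{\max}(\mathcal Q)\ge \frac{n}{k}\bigl(A_r(R)N_r+\overline{A}_r(R)N_{r-1}\bigr)-O(d)n .
\]
Because $A_r$ and $\overline A_r$ are linear in $\delta(R)/k$, which differs from $\delta(G)/n$ by $O(\varepsilon+d)$, this equals $g(\delta(G),n)-O(d)n$, where
\[
g(\delta,n):=A_r N_r+\overline{A}_r N_{r-1}, \qquad A_r=(r-1)\delta-(r-2)n,\quad \overline A_r=(r-1)n-r\delta .
\]
Since $rA_r+(r-1)\overline{A}_r=n$, it remains to show $g>n$, which amounts to $N_s>s$ for every $s\ge3$, together with at least one nonzero coefficient. \cref{Cr} gives $N_s=2s-3>s$ for $s=4,5$, and $N_3=3$. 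For $r=4$, however, $\delta\ge 2n/3+\alpha n$ forces $A_4>0$, which secures strict inequality. For $s\ge6$ we need a lower bound on $N_s$. We take the square of a directed path on $\lceil 2s/3\rceil$ vertices from \cref{longpath}, extend it arbitrarily to the square of a Hamilton cycle of $T_s$, and orient everything consistently with the long directed piece. This yields roughly $4s/3-O(1)$ edges in the dominant direction, plus about half of the remaining edges if we choose between the two directions the better one. The resulting bound should be $N_s\ge s+1$.

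The main obstacle is making this bound for $s\ge6$ rigorous. The square of a Hamilton cycle contains $2s$ edges, and the directed $2$-path supplies about $2(2s/3)$ of them. The other $2s/3$ edges could be adverse, so a naive count gives only $4s/3$ against $2s/3$. That already exceeds $s$, provided the boundary edges are handled with care, and that is where I would spend the effort.
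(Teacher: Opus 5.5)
Your pipeline is the paper's, and your periodic unrolling is just an explicit form of its blown-up square cycles with three edges deleted. Your handling of $r=4$ via $A_4=3\delta-2n>0$ is more careful than the paper, whose claim $N_{\min}>0$ fails at $r=4$ because $N_3=3$.

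\textbf{The missing bound.} The genuine gap is the one you flag: $g>n$ needs $N_s\ge s+1$ for every $s\ge 6$, and your count does not give it. The square of a directed path on $p=\lceil 2s/3\rceil$ vertices supplies only $2p-3$ forward edges. That is $5,7,9$ for $s=6,7,9$, so never more than $s$. Nothing controls the remaining edges of the square cycle, so there is no ``half of the rest''. This route therefore works only for $s=8$ and $s\ge 10$. The paper also only cites \cref{longpath} and \cref{Cr} at this point, so it has the same hole.

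One way to close it for every $s\ge 6$:
\begin{itemize}
\item Let $\chi(u,v)=1$ if $uv\in E$ and $\chi(u,v)=-1$ if $vu\in E$.
\item In a cyclic order $\dots a\,b\,x\,y\,c\,d\dots$, swapping $x$ and $y$ changes $\sigma^+-\sigma^-$ by exactly $h(a)+h(d)-2\chi(x,y)$, where $h(u)=\chi(u,y)-\chi(u,x)\in\{-2,0,2\}$.
\item Suppose every cyclic order had $\sigma^+=\sigma^-$. Since $s\ge 6$, for any distinct $a,d\notin\{x,y\}$ there are two further vertices to play $b,c$. Hence $h(a)+h(d)=\pm 2$ for all such pairs.
\item Then $h(a)/2$ and $h(d)/2$ have opposite parities for every such pair, which is impossible for three vertices.
\end{itemize}
So some cyclic order has $\sigma^+\neq\sigma^-$. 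Since $\sigma^++\sigma^-=2s$, that order has $\sigma_{\max}\ge s+1$.

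\textbf{The choice of $U$.} Taking $U$ to be the union of the tiles makes $\Delta(U)=r-1$. But $r$ is forced by $\delta(R)/k$, which can be arbitrarily close to $1$, so you cannot ``fix $r\le 1/\alpha$''. With $D=r-1$, the hypothesis $d\ll 1/D$ of \cref{prop:super-regular} fails. Also, deleting $2D\varepsilon|V_i|$ vertices per cluster exceeds the $9\varepsilon n$ budget once $r\ge 6$.

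The fix is to take $U=\bigcup_T S_T$, as the paper does with $U=\mathcal S$. Your template only uses edges of $S_T$, so super-regularity is needed only there. Then $\Delta(U)\le 4$, which gives exactly $8\varepsilon|V_i|$ deletions per cluster and the bound $9\varepsilon n$.
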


\begin{proof}

Choose additional constants $ M'\in \mathbb{N}$ such that
     
   \[
       1/n \ll 1/M \ll 1/M' \ll \varepsilon \ll d\ll\alpha\ll 1.
    \]

    We first apply \cref{lem:diregularity} to \(G\) with parameters \((\varepsilon, d, M')\) to obtain a partition \(V_0, V_1, \ldots, V_k\) of \(V(G)\) with \(k \geq M'\). Let \(R\) be the reduced oriented graph of $G$ with parameters \((\varepsilon, d)\) given by \cref{dirprop},  the minimum degree of $R$ satisfies $\delta(R) \ge (2/3+\alpha/2)k$ by the choice of $\varepsilon \ll d\ll\alpha\ll 1$. Let $G^*$ be the pure oriented graph. 
    Clearly, there exists an integer $r$ such that
  \begin{equation}\label{deR}
      (1-\frac{1}{r-1})k\le\delta(R)\le (1-\frac{1}{r})k.
  \end{equation}

    \cref{HSnew1} yields a perfect tiling $\mathcal{T}$ composed of vertex-disjoint copies: $A_r$ copies of $T_r$ and $\overline{A}_r$ copies of $T_{r-1}$. A direct calculation  shows that  the total number of tiles satisfies 
\begin{equation}\label{T}
     |\mathcal{T}|=A_r + \overline{A}_r=k-\delta(R)\le k-(1-\frac{1}{r-1})k\le M\le dn. 
\end{equation}

  For \(r \ge 5\),  there exists a oriented Hamilton square in $T_r$, denoted by \(S_i\), together with some $T_3$ and $T_4$, we get a factor \(\mathcal{S} = \{S_1, S_2, \dots, S_{|\mathcal{T}|}\}\) of \(R\) (some of $S_i$ is isomorphic to $T_3$ or $T_4$). By \cref{unav}, we have 

\begin{equation}\label{factorinR2}
         \sigma_{\max}(\mathcal{S})\geq \sum_{i=1}^{A_r}\sigma_{\max}(S_i)+\sum_{j=1}^{\overline{A}_r}\sigma_{\max}(S_j)\ge A_r\cdot N_r+\overline{A}_r\cdot N_{r-1}.
    \end{equation}

Applying \cref{prop:super-regular} with $U=\mathcal{S}$ and $\Delta = 4$, we obtain an oriented subgraph $G_{\mathcal{S}}$ of $G^*$.
More precisely, $V(G_{\mathcal{S}})$ is the union of $V_{i}^{'}$, where each $V'_i \subset V_i$ is obtained by deleting exactly $8\varepsilon|V_i|$ vertices. Furthermore, for every edge $ij\in E(\mathcal{S})$, $G_{\mathcal{S}}[V_i^{'},V^{'}_j]$ is $(\sqrt{\varepsilon},d-16\varepsilon)$-super-regular.
Observe that all \( V_i' \) have the same order $(1 - 8\varepsilon)m$, where $m=|V_i|$, and that every balanced blow-up of a cycle is Hamiltonian. Therefore, an application of \cref{blowup} yields a 2-cycle factor $\mathcal{H}=\{H_1, H_2, \dots, H_{|\mathcal{T}|}\}$ of $G_{\mathcal{S}}$, where  each $H_i$ is the square of a Hamilton cycle and is \((1 - 8\varepsilon)m\)-blow-up of \( S_i \). Thus,
 \begin{equation}\label{sigH}
         \sigma_{\max}(\mathcal{H})\geq (1-8\varepsilon)m\cdot\sigma_{max}(\mathcal{S}).
    \end{equation}
    If we delete three consecutive edges from each 2-cycle in $\mathcal{H}$ , we obtain a family $\mathcal{Q}=\{Q_1,Q_2,\dots, Q_{|\mathcal{T}|}\}$ of 2-path that covers all but at most $|V_0|+\sum_{i=1}^{k}8\varepsilon|V_i|\leq 9\varepsilon n$ vertices in $V(G).$

We now estimate the number of dominant direction edges in   $\mathcal{Q}.$ By \cref{dirprop}, we have $\delta(R) \ge (\delta(G)/|G| - 2d)k$. Together with the fact that \( \sigma_{\max}(Q_i) \geq \sigma_{\max}(H_i) - 3 \) for each $i\in[|\mathcal{T}|]$, along with \eqref{T}, \eqref{factorinR2} and \eqref{sigH}, we obtain
    \begin{equation}\label{sigQ}
         \sigma_{\max}(\mathcal{Q})\ge (1-8\varepsilon)m\cdot(A_r\cdot N_r+\overline{A}_r\cdot N_{r-1})-3|\mathcal{T}|,
    \end{equation}
the right of (\ref{sigQ}) is greater than
    
   \begin{equation}\label{1}
       (1 - 9\varepsilon)n \left[ \Big((r-1)N_r - rN_{r-1}\Big) (\frac{\delta(G)}{n}-2d) - \Big((r-2)N_r - (r-1)N_{r-1}\Big) \right]-3|\mathcal{T}|.
   \end{equation}
Noting that,  $N_r\le N_{r-1}+r-1,$ and we have 
 \begin{equation}\label{2}
      (r-1)N_r - rN_{r-1}\le(r-1)^2.
 \end{equation}
Furthermore, since $r<k\le M$ ( $U(R)$ was covered by $K_r$ and $K_{r-1}$), and by the choice of parameter $1/n\ll 1/M\ll d$, combining with (\ref{1}) and (\ref{2}) one can get $$\sigma_{\max}(\mathcal{Q})\ge \left((r-1)N_r - rN_{r-1}\Big) \delta(G) - \Big((r-2)N_r - (r-1)N_{r-1}\right)n -O(d) n.$$

Writing \( g(\delta(G), n) \) in the form
\[
g(\delta(G),n)=
 \left((r-1)N_r - rN_{r-1}\Big) \delta(G) - \Big((r-2)N_r - (r-1)N_{r-1}\right)n.
\]
we can show that the lower bound of this lemma is nontrivial.



\begin{claim}\label{lem:fgeqn}
$g(\delta(G),n)-O(d)n > n$. 
\end{claim}
\begin{proof}\renewcommand*{\qedsymbol}{$\blacksquare$}
Recall that $A_r := (r-1) \delta(G) - (r-2)n$ and $\overline{A}_r := (r-1)n - r \delta(G)$, and note that $f(\delta(G),n) = A_r N_r + \overline{A}_r N_{r-1}$. 
For every $\delta(G)\geq2n/3$, there exists an integer $r$ such that
$$\left(1 - \frac{1}{r-1}\right)n \le \delta(G) \le \left(1 - \frac{1}{r}\right)n,$$
 we have $A_r \ge 0$ and $\overline{A}_r \ge 0$. Moreover,
 \begin{equation}\label{arar}
     A_r + \overline{A}_r 
= \bigl((r-1)\delta(G) - (r-2)n\bigr) + \bigl((r-1)n - r\delta(G)\bigr) 
= n - \delta(G) > 0,
 \end{equation}

and
$$rA_r + (r-1)\overline{A}_r = r\bigl((r-1)\delta(G) - (r-2)n\bigr) + (r-1)\bigl((r-1)n - r\delta(G)\bigr) = n.$$
Therefore,
$$
\begin{aligned}
g(\delta(G),n) - O(d)-n 
&= A_r N_r + \overline{A}_r N_{r-1} - \bigl(rA_r + (r-1)\overline{A}_r\bigr)-O(d)n \\
&= A_r(N_r - r) + \overline{A}_r\bigl(N_{r-1} - (r-1)\bigr)-O(d)n.\\
\end{aligned}
$$
Define $N_{\min} = \min\{N_r - r, N_r - (r - 1)\}$. By \cref{longpath} and \cref{Cr}, we have $N_{\min} > 0$. Together with \eqref{arar}, this implies that the right-hand side is bounded below by $N_{\min}\cdot(n - \delta(G)) - O(d)n$, which is strictly positive.
\end{proof}

By now, we have already proved that there exist a function $g(\delta(G),n)$ such that $\sigma_{\max}(\mathcal{Q})\ge g(\delta(G),n) -O(d) n.$
\end{proof}

\smallskip

\subsection{ Absorbing Lemma.}

The following lemma provides an absorbing 2-path for small set of leftover vertices.

\begin{lemma}[Absorbing lemma {[\citen{Levitt}]}]\label{TH2}
 \itshape
 Let $G$ be an $n$-vertex oriented graph with $\delta(G)\geq 2n/3,$ then there is a $2$-path $P_{abs}$ with at most $\alpha^9 n$ vertices such that,  for every $W\subseteq V(G)\backslash V(P_{abs})$ of size at most $\alpha^{20} n $, $G[P_{abs}\cup W]$ contains a spanning  $2$-path $P_{AW}$ having  the same end edges as $P_{abs}$.
\end{lemma}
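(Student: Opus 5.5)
The absorbing lemma is orientation-blind: all it needs is that $U(G)$ contains a square path $P_{abs}$ with the stated absorbing property. So I would pass to the underlying graph $U(G)$, which satisfies $\delta(U(G))=\delta(G)\ge 2n/3$, and run the absorbing argument of Levitt, S\'ark\"ozy and Szemer\'edi \cite{Levitt} for the square of a Hamilton cycle. The output is a $2$-path in $U(G)$, and every edge of $U(G)$ carries a unique orientation in $G$, so it is automatically a $2$-path in $G$ in our sense. The plan has four steps.

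\textbf{Step 1: absorbers.} I define an absorber for a vertex $v$ to be a $2$-path $A=a_1a_2\dots a_t$ of bounded length with two properties: $A$ is a $2$-path in $G$, and inserting $v$ in the middle gives a $2$-path on $V(A)\cup\{v\}$ with the same end edges $a_1a_2$ and $a_{t-1}a_t$.

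The natural choice is a $2$-path $a_1a_2a_3a_4a_5a_6$ such that $a_1a_2a_3\,v\,a_4a_5a_6$ is also a $2$-path. For this, $v$ must be adjacent to $a_2,a_3,a_4,a_5$.

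If this is too restrictive, I would instead let $v$ replace a vertex $u$ and then re-insert $u$ elsewhere. Either way the absorber is a fixed bounded-size pattern.

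\textbf{Step 2: counting absorbers.} I would show that every $v$ has at least $\beta n^{t}$ absorbers for some $\beta=\beta(\alpha)>0$. This is the main obstacle, since at exactly $\delta=2n/3$ there is no slack.

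I would build the absorber greedily. Any two vertices have at least $n/3$ common neighbours, and any three vertices have a common neighbour, with at least $3\cdot 2n/3-2n=0$ guaranteed by inclusion--exclusion. That zero bound is the tight spot. I would handle it as \cite{Levitt} does: either choose the pattern so that each new vertex needs to be adjacent to at most two previously chosen vertices (together with $v$), or split into an extremal case near $K_{n/3,n/3,n/3}$ and treat that case directly. Here I would simply invoke that their counting depends only on the undirected degree.

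\textbf{Step 3: random selection.} I would pick each $t$-tuple independently with probability $p=\alpha^{10}n^{1-t}$. By Chernoff's bound and Markov's inequality, with positive probability the following all hold:
\begin{itemize}
\item there are at most $\alpha^{9}n/(2t)$ tuples;
\item fewer than $\alpha^{21}n$ pairs of tuples intersect;
\item every $v$ has at least $\beta\alpha^{10}n/2$ of its absorbers selected.
\end{itemize}
Deleting the overlapping tuples and any non-absorbers leaves a family $\mathcal F$ of disjoint absorbers in which every vertex still has more than $\alpha^{20}n$ absorbers.

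\textbf{Step 4: chaining.} I would link the members of $\mathcal F$ into one $2$-path $P_{abs}$ using a connecting lemma. In $U(G)$, any two disjoint edges $xy$ and $x'y'$ (each with endpoints having many common neighbours) can be joined by a bounded-length $2$-path through unused vertices. This again follows from the $2n/3$ common-neighbourhood counts, and there is enough room because $|\mathcal F|t\le\alpha^9n/2$.

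Finally, given $W$ with $|W|\le\alpha^{20}n$, I would absorb its vertices one at a time into distinct absorbers, assigned greedily. This is possible because each vertex has more than $\alpha^{20}n$ absorbers in $\mathcal F$. The end edges of $P_{abs}$ are untouched, which gives the spanning $2$-path $P_{AW}$.
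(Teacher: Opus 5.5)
Your reduction to $U(G)$ is what the paper does: it simply invokes \cite{Levitt} while ignoring orientations. The gap is in the argument you sketch behind that invocation, which breaks at Step~2. The obstruction is not technical: under the bare hypothesis $\delta(G)\ge 2n/3$, the statement is false.

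Take any orientation of $K_{n/3,n/3,n/3}$ with parts $X,Y,Z$. In a $2$-path every three consecutive vertices form a triangle, so the sequence of parts visited is $3$-periodic ($XYZXYZ\dots$ or its reverse). Hence the ordered first and last edges determine the number of vertices modulo $3$, and no $P_{abs}$ can absorb a single vertex $W=\{w\}$.

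The same example rules out both of your fallbacks.
\begin{itemize}
\item Suppose the union of the two $2$-paths forming an absorber (with and without $v$) were $3$-colourable. Then it would embed in $K_{n/3,n/3,n/3}$, which is impossible. So every absorber graph has chromatic number at least $4$; your own pattern contains the $K_4$'s on $\{v,a_2,a_3,a_4\}$ and $\{v,a_3,a_4,a_5\}$.
\item In particular, no absorber can be built so that each new vertex has at most two earlier neighbours, since that ordering makes it $2$-degenerate and hence $3$-colourable. Some vertex must be chosen in a triple common neighbourhood, which is exactly where your inclusion--exclusion bound is $0$.
\item Nor can the near-extremal case be ``treated directly'', because the lemma fails there. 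That case needs a separate argument for the whole square Hamilton cycle, not an absorbing path.
\end{itemize}
The example also refutes your Step~4 claim. A $2$-path starting with an edge from $X$ to $Y$ can never end with an edge from $Y$ to $X$, so connecting needs more than common-neighbourhood counts.

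The missing ingredient is non-extremality. The absorbing and connecting lemmas of \cite{Levitt} are proved for graphs that are not $\alpha$-extremal in the sense of \cref{a-ex}, and this is the source of the $\alpha$ in $\alpha^9$ and $\alpha^{20}$. The paper itself only applies the lemma when $\delta(G)>(2/3+4\alpha/3)n$, which forces non-extremality by \cref{clm:extremal}.

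With such slack, say $\delta(G)\ge(2/3+\alpha)n$, your Step~2 is routine, since every triangle has at least $3\alpha n$ common neighbours. Choose
\begin{itemize}
\item $a_3\in N(v)$ and $a_4\in N(v)\cap N(a_3)$;
\item $a_2,a_5\in N(v)\cap N(a_3)\cap N(a_4)$;
\item $a_1\in N(a_2)\cap N(a_3)$ and $a_6\in N(a_4)\cap N(a_5)$.
\end{itemize}
This gives $\Omega(\alpha^2 n^6)$ absorbers for each $v$. Step~4 then follows from the connecting lemma of \cite{Levitt} under the same non-extremality.

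There is also a minor slip in Step~3. The expected number of intersecting selected pairs is about $t^2\alpha^{20}n$, not below $\alpha^{21}n$. This is harmless, because each vertex still keeps order $\alpha^{12}n$ selected absorbers.
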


\subsection{ Connecting and Reservoir Lemma.}

Next we proceed with the connecting lemma, which ensures the existence of short paths connecting prescribed vertex pairs, even when a small subset of vertices is forbidden.

\begin{lemma}[Connecting lemma {[\citen{Levitt}]}] \label{TH3}
\itshape 
 Let $G$ be an $n$-vertex oriented graph with $\delta(G)\geq 2n/3.$
For any two disjoint edges of $G$, $ab$ and $cd$, there exist a $2$-path with order $k\le 10/ \alpha^4$, which connects $ab$ and $cd$. Furthermore, this statement  remains true even if at most $\alpha^9 n$ vertices are  forbidden to be used on this connecting path.
\end{lemma}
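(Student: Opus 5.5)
We reduce \cref{TH3} to the undirected connecting lemma of \cite{Levitt} applied to the underlying graph $U(G)$. The point is that a $2$-path in $G$ is, by our conventions, an oriented $2$-path. So it is exactly an orientation of a $2$-path in $U(G)$, with the orientation of each edge inherited from $G$. Since $G$ is oriented, every vertex has $d_{U(G)}(v)=d^+_G(v)+d^-_G(v)=d_G(v)$, hence $\delta(U(G))=\delta(G)\ge 2n/3$. This is precisely the Pósa-type threshold for $k=2$ used in \cite{Levitt}. Also, each edge $ab$ of $G$ is an edge of $U(G)$, and disjointness of $ab$ and $cd$ is preserved.

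The plan therefore has three steps.

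\textbf{Step 1.} Check that the connecting lemma in \cite{Levitt} is stated, or can be read, with the same parameter regime. It should hold for graphs with $\delta\ge 2n/3$ (or $\delta\ge(2/3+\alpha)n$, in which case we would invoke it with the $\alpha$ already fixed in our hierarchy $1/n\ll\alpha\ll1$). It should produce, for any two disjoint edges, a square path of order at most $10/\alpha^4$ starting with one edge and ending with the other, while avoiding any forbidden set of size at most $\alpha^9 n$.

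\textbf{Step 2.} Apply it in $U(G)$ to the pair $ab,cd$ with forbidden set $F$, $|F|\le\alpha^9 n$. This gives a square path $P=x_1x_2\dots x_k$ in $U(G)-F$ with $\{x_1,x_2\}=\{a,b\}$, $\{x_{k-1},x_k\}=\{c,d\}$, and $k\le 10/\alpha^4$.

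\textbf{Step 3.} Lift back to $G$. Every pair $x_ix_{i+1}$ and $x_ix_{i+2}$ is an edge of $U(G)$, so it carries a unique orientation in $G$. Hence $P$ with these orientations is a $2$-path in $G$ connecting $ab$ and $cd$ and avoiding $F$. Connecting is only an undirected requirement, so no condition on directions needs to be verified.

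The main obstacle is Step 1. The cited argument must genuinely use only the minimum degree of the underlying graph and not some extra structure, and its constants must match ours. If the lemma in \cite{Levitt} only requires $\delta\ge(2/3+\alpha)n$, we would note that in the application $\delta(G)\ge(2/3+\alpha)n$ holds in the relevant case of the proof overview. If instead the lemma is proved there only implicitly, we would reproduce its idea. First, use the common neighbourhoods: any two adjacent vertices have at least $n/3$ common neighbours, and any three mutually adjacent vertices have at least $\alpha n$ common neighbours when $\delta\ge(2/3+\alpha)n$. Use these to grow square paths from $ab$ and from $cd$ into linearly large sets. Then join them through a triangle-connectivity argument in $U(G)$ with path length bounded in terms of $\alpha$, so that deleting $\alpha^9 n$ forbidden vertices leaves all these counts positive.
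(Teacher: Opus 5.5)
Your proposal follows the paper's route. The paper gives no separate argument for \cref{TH3}: it only remarks that the connecting argument of \cite{Levitt} goes through for $U(G)$ with orientations disregarded, which is exactly your reduction via $\delta(U(G))=\delta(G)$ and the observation that every square path of $U(G)$ is a $2$-path of $G$. Your Step~1 check should record that the lemma in \cite{Levitt} also assumes the graph is not $\alpha$-extremal. With only $\delta\ge 2n/3$ it fails for ordered end pairs: in any orientation of $K_{n/3,n/3,n/3}$ with parts $X_1,X_2,X_3$, a square path starting in $X_1\times X_2$ can never end in $X_2\times X_1$. In the application this hypothesis is supplied by \cref{clm:extremal}, since there $\delta(G)>(2/3+4\alpha/3)n$.
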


After presenting  the absorbing and connecting lemmas, the next lemma allows us to efficiently connect many  end-edges with only a few vertices, without disturbing the existing structure of the graph.

\begin{lemma}[Reservoir lemma {[\citen{Levitt}]}] \label{TH4}
Let \(G\) be an \(n\)-vertex oriented  graph with $\delta(G)\ge 2n/3$. For any vertex subset \(W \subseteq V(G)\) with \(|W| \le \alpha^9 n\) there exists a set of vertices \(\mathcal{R} \subseteq V(G) \setminus W\) of size at most \(\alpha^{20}n/2\) having the following property:
For every \(S \subseteq \mathcal{R}\) with \(|S| \le \alpha^9 |\mathcal{R}|\) and for every two disjoint  edges $ab$ and $cd$ in $V(G)\backslash \mathcal{R} $, there is a $2$-path with order $k\le 10/ \alpha^4$ in \(\mathcal{R} \setminus S\).
\end{lemma}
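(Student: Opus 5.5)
Since the statement is cited from \cite{Levitt}, the plan is to reproduce the standard reservoir argument: take $\mathcal R$ to be a random subset and use concentration. The orientation plays no role here, because the required connecting object is only a $2$-path in the underlying graph. Fix $p:=\alpha^{20}/4$ and let $\mathcal R'$ contain each vertex of $V(G)\setminus W$ independently with probability $p$. Chernoff's bound gives $|\mathcal R'|\le \alpha^{20}n/2$ with high probability; we then take $\mathcal R:=\mathcal R'$.

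The key input is a \emph{robust} form of \cref{TH3}. For any two disjoint edges $ab,cd$, we want not just one connecting $2$-path but many. Concretely, we aim to show that for some fixed $k\le 10/\alpha^4$ there are at least $\beta n^{k-4}$ connecting $2$-paths of order $k$ whose interior vertices avoid $\{a,b,c,d\}$, where $\beta=\beta(\alpha)>0$. To get this, we go back into the proof of the connecting lemma. There, $\delta(G)\ge 2n/3$ makes every edge lie in $\Omega(n)$ triangles and every triangle extend to $\Omega(n)$ copies of $K_4$, and connections are built by walking through a bounded number of such cliques. Counting choices at each step gives a positive proportion of all $(k-4)$-tuples. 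Equivalently, the ``forbidden set of size $\alpha^9 n$'' version of \cref{TH3} is a supersaturation statement in disguise: if fewer than $\beta n^{k-4}$ such paths existed, a greedy hitting set of size $\alpha^9 n$ would destroy all of them, contradicting \cref{TH3}. This hitting-set route works only for a single fixed length, so we pigeonhole over the boundedly many lengths $k\le 10/\alpha^4$ to fix one length carrying $\beta' n^{k-4}$ paths.

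With this in hand, fix a pair $ab,cd$ and consider the family $\mathcal P_{ab,cd}$ of such interior $(k-4)$-sets. Let $X_{ab,cd}$ be the number of members lying entirely inside $\mathcal R$. Its expectation is $\Theta(p^{k-4}n^{k-4})$. We want more: after deleting any $S$ with $|S|\le\alpha^9|\mathcal R|$, some member survives. We therefore count members inside $\mathcal R$ and note that each vertex of $\mathcal R$ lies in only $O((pn)^{k-5})$ of them. Then $|S|\cdot O((pn)^{k-5})\le \alpha^9\cdot O((pn)^{k-4})$, which is smaller than $\beta'' (pn)^{k-4}$ once $\alpha^9$ is small compared with $\beta''$. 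Making this comparison legitimate requires choosing constants so that $\beta(\alpha)\gg\alpha^9$, which is a concern since $\beta$ depends on $\alpha$. This is where I expect the main obstacle. The fix I would try first is to derive $\beta$ from the clique-extension counting, where it is polynomial in $\alpha$ but independent of $\alpha^9$, rather than from the $\alpha^9 n$ forbidden-set hitting argument.

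Concentration comes next. The lower-tail bound follows from Janson's inequality, since the sets have bounded size and the overlap term is dominated, and the upper bound on vertex degrees follows from Kim--Vu or a simple union bound. Each bound fails with probability $\exp(-\Omega(n))$ for lower tails, which suffices for a union bound over the at most $n^4$ pairs $(ab,cd)$ and over vertices. Finally, we remove $W$ from consideration at the start; this costs nothing, since we sampled from $V(G)\setminus W$ and $|W|\le\alpha^9 n$ leaves the counts essentially unchanged, as counts only drop by $O(\alpha^9 n^{k-4})$. Fixing an outcome where all events hold gives the required $\mathcal R$.
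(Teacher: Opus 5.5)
The paper gives no argument of its own for this lemma. It imports the undirected reservoir lemma of Levitt--S\'ark\"ozy--Szemer\'edi~\cite{Levitt} for $U(G)$, so your remark that orientations play no role is exactly the paper's reduction. The gap is in your reconstruction of the undirected lemma, at the step you flag yourself.

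To survive deleting an arbitrary $S$ with $|S|\le\alpha^9|\mathcal R|$ (and, by the same computation, to discard paths through $W$), you need $\beta>(k-4)\alpha^9$. That is essentially $\beta\gg\alpha^5$ when $k$ may be close to $10/\alpha^4$, and neither of your proposed sources of $\beta$ delivers it.

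\emph{The hitting-set argument is invalid.} Robustness of \cref{TH3} to $\alpha^9 n$ forbidden vertices only says that the family of interiors has transversal number above $\alpha^9 n$. A family of about $n/k$ pairwise disjoint $(k-4)$-sets already has that property while having only linearly many members. What \cref{TH3} actually yields is about $\alpha^{13}n/10$ vertex-disjoint connecting paths, not $\beta n^{k-4}$ paths.

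\emph{The clique-extension argument rests on a false premise.} Under $\delta(G)\ge 2n/3$ a triangle is guaranteed only $3\cdot\frac{2n}{3}-2n=0$ common neighbours, and any orientation of $K_{n/3,n/3,n/3}$ is $K_4$-free. Worse, in that graph the part sequence along a square path is $3$-periodic, so no square path leads from an $(A,B)$-edge to a $(B,A)$-edge at all. The lemma therefore tacitly needs the non-$\alpha$-extremality that the paper secures via \cref{clm:extremal} before applying it, and your argument never uses it.

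\emph{Square-path counts decay geometrically in the length.} In a $\frac{2n}{3}$-regular graph, which may well be non-extremal, every codegree is at most $\frac{2n}{3}$, so $\beta\le(2/3)^{k-4}$. Hence any count read off a proof that produces paths of length up to $10/\alpha^4$ is exponentially small in $\alpha^{-4}$, not polynomial in $\alpha$. You would need connections of length $O(\log(1/\alpha))$ numbering at least $(k-4)\alpha^9 n^{k-4}$, which is far more than \cref{TH3} gives.

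Two repairs are available.

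\emph{Weaker version sufficient for the paper.} The application in \cref{proofmain} only needs $|S|\le t\cdot 10/\alpha^4=O(1)$. For that, drop supersaturation entirely. Apply \cref{TH3} greedily, forbidding $W$ and the vertices already used (this needs a little slack in the forbidden budget), to get $\Omega_\alpha(n)$ vertex-disjoint connecting paths for each pair of edges. These lie in $\mathcal R$ independently, each with probability at least $p^{10/\alpha^4}$. Chernoff's bound plus a union bound over the $O(n^4)$ pairs then leaves $\Omega_\alpha(n)>|S|$ of them inside $\mathcal R$.

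\emph{The stated version.} For the stated robustness $|S|\le\alpha^9|\mathcal R|$, avoid counting altogether. Show that the random $\mathcal R$ inherits the hypotheses of the connecting lemma: degrees and codegrees into $\mathcal R$ concentrate by Chernoff, and the non-extremality must transfer as well. Then run the connecting lemma inside $G[\mathcal R\cup\{a,b,c,d\}]$ with $S$ as the forbidden set. The matching exponent $\alpha^9$ in \cref{TH3} and \cref{TH4} fits this scheme. What you must check is that the connecting argument tolerates the $o(1)$ loss in minimum degree.
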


\section{Proof of  Theorem~\ref{th1}}\label{proofmain}
Let \(\alpha\) be a constant with \(0 < 4\alpha \leq \xi\). If 
$\frac{2n}{3} \leq \delta(G) \leq \left( \frac{2}{3} + \frac{4\alpha}{3} \right)n,$
then \cref{posa} guarantees that \(G\) contains the square of a Hamilton cycle \(H\), for which \(\sigma_{\max}(H) \geq n\) holds trivially; taking \(f(\delta(G),n) = 3\delta(G)-n\) in \cref{th1}, the upper bound on \(\delta(G)\) yields
\[
\sigma_{\max}(H) \geq n \geq 3\left(\frac{2}{3} + \frac{4\alpha}{3}\right)n - n - \xi n \geq 3\delta(G) - n - \xi n,
\]
which implies the desired inequality. In the following, we consider $\delta(G)> (2/3+4\alpha/3) n$ and define

\[
f(\delta(G),n)=
\begin{cases}
3\delta(G)-n, \qquad \qquad\qquad\qquad \qquad\qquad\qquad \qquad\qquad
  \dfrac{2n}{3}\le \delta(G)\le \left(\dfrac{2}{3}+\dfrac{4\alpha}{3}\right)n,\\[1ex]
 \left((r-1)N_r - rN_{r-1}\Big) \delta(G) - \Big((r-2)N_r - (r-1)N_{r-1}\right)n
\quad \text{otherwise}.
\end{cases}
\]

\subsection{$\alpha$-extremal condition}

\begin{definition}[{[\citen{Levitt}]}] \label{a-ex}
  
    An oriented graph $G$ on $n$ vertices is \textit{$\alpha$-extremal} if there exist (not necessarily disjoint) $A,B\subseteq V(G)$ such that:
    
    \medskip
\begingroup
\setlength{\parindent}{0pt}
\setlength{\parskip}{0.45em} 
\textnormal{($i$)} $(\frac{1}{3}-\alpha)n\le|A|,|B|\le \frac{1}{3}n$,

\textnormal{($ii$)} $d_{U(G)}(A,B)<\alpha.$

\endgroup
    
\end{definition}

The following proposition uncovers the relation between $\delta(G)$ and the $\alpha$-extremal condition.

\begin{proposition}\label{clm:extremal}
Let $G$ be an oriented graph on $n$ vertices satisfies the $\alpha$-extremal condition, then $\delta\big(G\big)\le (2/3+4\alpha/3)n$.
\end{proposition}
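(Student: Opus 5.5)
The plan is a direct averaging argument. It uses only the definition of $\alpha$-extremality (\cref{a-ex}) and the fact that a vertex has few neighbours in a set it is sparsely joined to. Let $A,B\subseteq V(G)$ witness that $G$ is $\alpha$-extremal, so $(1/3-\alpha)n\le |A|,|B|\le n/3$ and $d_{U(G)}(A,B)<\alpha$. Write $e_{U}(A,B)$ for the number of ordered pairs $(a,b)\in A\times B$ with $ab\in E(U(G))$, so that $d_{U(G)}(A,B)=e_U(A,B)/(|A||B|)$. This convention works even when $A$ and $B$ overlap, since $U(G)$ has no loops.

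\textbf{Step 1: find a vertex of $A$ with few neighbours in $B$.} We have
\[
\sum_{a\in A}|N_{U(G)}(a)\cap B| = e_U(A,B) < \alpha|A||B|.
\]
By averaging, some vertex $a_0\in A$ satisfies $|N_{U(G)}(a_0)\cap B|<\alpha|B|$.

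\textbf{Step 2: bound its total degree.} In the oriented graph $G$, the total degree of $a_0$ equals its degree in $U(G)$, because there is at most one edge between any two vertices. Split its neighbourhood into the part outside $B$ and the part inside $B$:
\[
d(a_0)\le |V(G)\setminus B| + |N_{U(G)}(a_0)\cap B| < n-|B|+\alpha|B| = n-(1-\alpha)|B|.
\]
Using $|B|\ge (1/3-\alpha)n$ gives
\[
d(a_0) < n-(1-\alpha)\left(\tfrac13-\alpha\right)n = \left(\tfrac23+\tfrac{4\alpha}{3}-\alpha^2\right)n \le \left(\tfrac23+\tfrac{4\alpha}{3}\right)n.
\]
Hence $\delta(G)\le d(a_0)\le (2/3+4\alpha/3)n$, as claimed.

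There is no real obstacle here. The only points needing care are that the density is taken in the underlying graph, so that the degree of a vertex in $U(G)$ equals its total degree in $G$, and that $A$ and $B$ may intersect; neither affects the counting above. The upper bound $|A|,|B|\le n/3$ is not needed, only the lower bound on $|B|$ and the nonemptiness of $A$.
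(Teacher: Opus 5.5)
Your proof is correct and is essentially the paper's argument: average $e_{U}(A,B)<\alpha|A||B|$ over $A$ to find $a_0$ with fewer than $\alpha|B|$ neighbours in $B$, then bound $d(a_0)$ by $n-(1-\alpha)|B|$ and use $|B|\ge(1/3-\alpha)n$. Your estimate $|N(a_0)\setminus B|\le n-|B|$ is slightly more careful than the paper's $n-1-|B|$, which tacitly assumes $a_0\notin B$; your version therefore also covers the case where $A$ and $B$ overlap.
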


\begin{proof}

By \cref{a-ex}, we have
$e(A,B) < \alpha |A||B|.$
Hence the average number of neighbors in $B$ of a vertex in $A$ is less than $\alpha|B|$:
\[
\frac{1}{|A|}\sum_{a\in A} d(a,B)
=\frac{e(A,B)}{|A|} < \alpha |B|.
\]
Therefore there exists $a\in A$ with
\begin{equation}\label{eq:small-cross-degree}
d(a,B) < \alpha |B|.
\end{equation}
Every neighbor of $a$ in $G$ lies either in $B$ or in $V(G)\setminus B$, so
\[
d(a)
= d(a,B) + d\big(a, V(G)\setminus B\big)
\le d(a,B) + (n-1-|B|).
\]
Combining with \eqref{eq:small-cross-degree} and $|B|\ge (1/3-\alpha)n$, we obtain
$\delta(G)\le (2/3+4\alpha/3)n.$
\end{proof}

\subsection{Completion of the proof of Theorem~\ref{th1}}\label{sec:proof2}

Suppose that
\[
1/n \ll 1/M \ll \varepsilon \ll d \ll 4\alpha \le \xi \ll 1.
\]

By \cref{clm:extremal}, $G$ is non-$\alpha$-extremal. 
Applying \cref{TH2} to get an absorbing 2-path \(P_{abs}\) with \(|V(P_{abs})| \le  \alpha^9 n\) such that, for every \(W \subseteq V(G) \setminus V(P_{abs})\) with \(|W| \le \alpha^{20} n\), \(G[V(P_{abs}) \cup W]\) contains a spanning 2-path having the same end edges as \(P_{abs}\).
Next we apply  \cref{TH4} to get a reservoir set   \(\mathcal{R} \subseteq V(G) \setminus V(P_{abs})\) with \(|\mathcal{R}| = \left\lfloor \frac{\alpha^{20}}{2} n \right\rfloor\).

    Let $G'=G\backslash(\mathcal{R}\cup V(P_{abs}))$ and $|G^\prime|=n^\prime$, clearly,
    $\delta(G')\geq (\frac{2}{3}+\frac{4\alpha}{3}-\alpha^9  - \frac{\alpha^{20}}{2} )n>(\frac{2}{3}+\alpha)n^\prime.$ 
    Applying \cref{TH1} to $G'$, we obtain a family $\mathcal{Q}$ of vertex-disjoint 2-paths, say $ \{Q_1, Q_2, \dots, Q_t\} $, where $t \le M$. These 2-paths cover $V(G')$ except for a vertex set of at most $9\varepsilon n $ vertices. Furthermore, \cref{TH1} guarantees that the $\mathcal{Q}$ satisfies: 
\begin{equation} \label{eq:cov_val}
     \sigma_{\max}(\mathcal{Q}) \ge f(\delta(G'),n')-O(d) n'.
\end{equation}

For convenience let us write \(Q_0 := P_{abs}\) and for every \(0 \le i \le t\), we say \(Q_i\) has initial edge $a_i$ and terminal edge $b_i$.
We then show how to find $H$ such that $\sigma_{\max}(H)\ge f(\delta(G),n)-\xi n$. To begin with, for the paths \(Q_0\) and \(Q_1\), by \cref{TH4} there exists an \((b_0, a_1)\)-path $L_0$ with at most $10/\alpha^4$ internal vertices from \(\mathcal{R}\). 

   We can greedily apply Lemma \ref{TH4} to find a \((b_i, a_{i+1})\)-path $L_i$ for every \(0 \le i \le t\) (write \(a_0 = a_{t+1}\)), since the total number of used vertices in the reservoir set \(\mathcal{R}\) is at most 
   \[
    \lvert \bigcup_{i=0}^{t} V(L_{i}) \rvert <t \cdot \frac{10}{\alpha^4}<\alpha^9 \lvert \mathcal{R}\rvert.
    \]
   
So far we have gotten the square of a cycle, say \(H_0\), that contains \(Q_0\), \(\mathcal{Q}\), and a fraction of vertices in \(\mathcal{R}\). 
Let \(W := V(G) \setminus V(H_0)\),
the total number of vertices outside $H_0$ is less than $9\varepsilon n+ \left\lfloor\frac{\alpha^{20}}{2} n\right\rfloor \le \alpha^{20} n.$
 Applying \cref{TH2} there exists a path \(P_{AW}\) spanning \(V(P_{abs}) \cup U\) having the same end-edges as \(P_{abs}\). 
Therefore, we get the square of a Hamilton cycle $H$ if we replace \(P_{abs}\) by \(P_{AW}\) in \(H_0\), as desired.

    Finally, we derive the number of edges in the dominant direction  of $H$. 
Throughout the entire proof, absorption and connection do not reduce the number of existing edges which belongs to $\sigma_{\max}(\mathcal{Q})$. Therefore, we directly neglect the impacts induced by them in the calculation, thereby arriving at following.
    \begin{align*}
    &\sigma_{\max}(H)
    \ge  \sigma_{\max}(\mathcal{Q}) 
    \overset{\mathclap{\eqref{eq:cov_val}}}{\ge}
    \Big((r-1)N_r - rN_{r-1}\Big) \delta(G^{\prime}) - \Big((r-2)N_r - (r-1)N_{r-1}\Big) n^\prime - O(d)n^\prime \\[2mm]
   &\ge \Big((r-1)N_r - rN_{r-1}\Big)(\delta(G)-(\alpha^9+\tfrac{\alpha^{20}}{2})n)
     - \Big((r-2)N_r - (r-1)N_{r-1}\Big)n- O(d)n\\[2mm]
    & \ge f(\delta(G),n)-\xi n.
   \end{align*}
   Furthermore, using the same method as in \cref{lem:fgeqn} , it can be shown that this is a nontrivial lower bound, i.e., $f(\delta(G),n)-\xi n > n$.

\subsection{Proof of Corollary~\ref{coro}}
Let $1/n \ll 1/M \ll \varepsilon \ll d \ll 4\alpha \le \xi \ll 1$
and \(\alpha\) is as defined in \cref{a-ex}. \cref{th1} has already established the case when \(2n/3 \leq \delta(G) \leq (2/3 + 4\alpha/3)n\); we then assume that \(\delta(G) \geq (2/3 + 4\alpha/3)n\).

Applying \cref{lem:diregularity} to \(G\) with parameters \((\varepsilon, d, M')\). By \cref{dirprop}, the minimum degree of its oriented reduced graph \(R\) satisfies \(\delta(R) \ge 2|R|/3\). Furthermore, \(U(R)\) is \(K_6\)-free. Indeed, if \(U(R)\) contains a \(K_6\) on clusters \(V_1,\ldots,V_6\), then each pair \((V_i,V_j)\) is \(\varepsilon\)-regular with density at least \(d\), so that  \cref{counting lemma} would yield a tournament on $6$ vertices in \(G\), contradicting the hypothesis. Hence \(U(R)\) is \(K_6\)-free. By Tur\'{a}n's theorem, \(U(R)\) has at most \((1-1/5)|R|^2/2\) edges, implying \(\delta(R)\le 4|R|/5\). Combining the lower and upper bounds on \(\delta(R)\) and following the argument of \cref{th1} with the parameter settings from \cref{Cr}, we obtain \(\sigma_{\max}(H)\ge 3\delta(G)-n-\xi n\), as desired.

\section{Concluding remarks}\label{sec:remark}

In this paper we have studied the oriented discrepancy problem for the square of a Hamilton cycle in oriented graphs with minimum degree at least \(2n/3\), thereby answering a question of Freschi and~Lo~\cite{Freschi}. Our proof combines the absorbing method with the directed regularity lemma.

 Several natural directions remain open. It would be interesting to determine the function that guarantees the existence of the square of a Hamilton cycle with $\sigma_{max}(H)\ge f(\delta(G),n)$. 

    \begin{problem}
        For every $\xi > 0$, there exist positive integers $r$ and $n_0$ such that the following holds. Determine  the largest function $f(\delta(G),n)$ such that, for every  oriented graph $G$ on $n$ vertices with $\delta(G) \ge 2n/3$, there exist the square of a Hamilton cycle $H$ satisfying $\sigma_{\max}(H)\ge f(\delta(G),n).$
    \end{problem}

    Recent work has extended Dirac-type discrepancy results to Ore-type conditions for Hamilton cycles~\cite{Chang1}. A natural question is whether, under an Ore-type condition (e.g., \(\sigma_2(G)\ge 4n/3\)), one can obtain a result analogous to \cref{th1}.

The function obtained in the main result of this paper heavily depends on the value of $M_n$ and $N_n$ in \cref{unav}, and its uncertainty is precisely why we cannot further advance using this approach. Therefore, another very interesting problem is to determine the exact value of \(M_n\) for $n\ge6$.

\newpage
\appendix

\section{Verification of $M_5=3$}\label{appendix}
In this section, we verify the equality $M_5=3$ using a Python program. The first part of the program enumerates all non-isomorphic tournaments on five vertices, and the second part checks that $M_5=3$.
\begin{lstlisting}[style=Python, title=``Generate 12 adjacency matrices of non-isomorphic tournaments $T_5$"]
import itertools

def edges_to_adj_matrix(n, edges_direction):
    adj = [[0] * n for _ in range(n)]
    for (i, j), dir in edges_direction.items():
        if dir == 1:
            adj[i][j] = 1
        else:
            adj[j][i] = 1
    return adj

def adj_to_canonical_fast(adj):
    n = len(adj)
    outdeg = [sum(row) for row in adj]
    vertices = list(range(n))
    vertices.sort(key=lambda x: outdeg[x])
    groups = []
    i = 0
    while i < n:
        j = i
        while j < n and outdeg[vertices[j]] == outdeg[vertices[i]]:
            j += 1
        groups.append(vertices[i:j])
        i = j
    best = None
    group_perms = [itertools.permutations(g) for g in groups]
    for perms in itertools.product(*group_perms):
        perm = []
        for p in perms:
            perm.extend(p)
        new_adj = [[0] * n for _ in range(n)]
        for i in range(n):
            for j in range(n):
                if adj[perm[i]][perm[j]]:
                    new_adj[i][j] = 1
        flat = tuple(new_adj[i][j] for i in range(n) for j in range(n))
        if best is None or flat > best:
            best = flat
    return best

def gen_tournaments_correct(n):
    m = n * (n - 1) // 2
    edges = [(i, j) for i in range(n) for j in range(i + 1, n)]
    seen = set()
    tournaments = []
    for bits in range(1 << m):
        dirs = {}
        for idx, (i, j) in enumerate(edges):
            dirs[(i, j)] = 1 if (bits >> idx) & 1 else 0
        adj = edges_to_adj_matrix(n, dirs)
        canon = adj_to_canonical_fast(adj)
        if canon not in seen:
            seen.add(canon)
            tournaments.append(adj)
    return tournaments

n = 5
tournaments = gen_tournaments_correct(n)
print(f"Number of non-isomorphic tournaments for n=5: {len(tournaments)}")

filename = 'T5'
with open(filename, 'w') as f:
    for adj in tournaments:
        adj_str = ';'.join(','.join(map(str, row)) for row in adj)
        f.write(f"{adj_str}\n")

print(f"Results saved to '{filename}'")

\end{lstlisting}

\begin{lstlisting}[style=Python, title=``Verification of $M_5$"]


import itertools

def read_tournaments_from_file(filename):
    tournaments = []
    with open(filename, 'r', encoding='utf-8') as f:
        lines = [line.strip() for line in f if line.strip()]

    for idx, line in enumerate(lines, 1):
        rows = line.split(';')
        n = len(rows)
        edges = []
        for i in range(n):
            cols = rows[i].split(',')
            for j in range(n):
                if int(cols[j]) == 1:
                    edges.append((i, j))
        tournaments.append((f"Tournament {idx}", edges))

    return tournaments

def build_adjacency_matrix(edges, n=5):
    adj = [[0] * n for _ in range(n)]
    for i, j in edges:
        adj[i][j] = 1
    return adj

def count_reversed_arcs_for_cycle(cycle, adj):
    n = len(cycle)
    consistent = 0
    
    for k in range(n):
        i = cycle[k]
        j = cycle[(k + 1) % n]
        if adj[i][j] == 1:
            consistent += 1
    
    for k in range(n):
        i = cycle[k]
        j = cycle[(k + 2) % n]
        if adj[i][j] == 1:
            consistent += 1
            
    total_edges = n * 2
    inconsistent = total_edges - consistent
    return min(consistent, inconsistent)

def find_min_reversed_arcs_for_tournament(edges, n=5):
    adj = build_adjacency_matrix(edges, n)
    vertices = list(range(n))
    all_cycles = []
    
    for perm in itertools.permutations(vertices[1:], n - 1):
        cycle = (0,) + perm
        all_cycles.append(cycle)

    min_rev = float('inf')
    for target_rev in range(0, n + 1):
        found = False
        for cycle in all_cycles:
            rev_count = count_reversed_arcs_for_cycle(cycle, adj)
            if rev_count == target_rev:
                min_rev = target_rev
                found = True
                break
        if found:
            break

    if min_rev == float('inf'):
        for cycle in all_cycles:
            rev_count = count_reversed_arcs_for_cycle(cycle, adj)
            if rev_count < min_rev:
                min_rev = rev_count

    return min_rev

def main():
    print("=" * 60)
    print("Computation of minimum reversed arcs for 12 tournaments on 5 vertices")
    print("=" * 60)

    filename = "T5"
    tournaments = read_tournaments_from_file(filename)
    print(f"Successfully read {len(tournaments)} tournaments\n")

    results = []
    for idx, (name, edges) in enumerate(tournaments, 1):
        print(f"Computing {name}...")
        min_rev = find_min_reversed_arcs_for_tournament(edges)
        results.append((name, min_rev))
        print(f"  Minimum reversed arcs = {min_rev}")

    rev_values = [rev for _, rev in results]
    max_val = max(rev_values)
    print(f"\n M5= {max_val}")

    output_file = "tournament_results.txt"
    with open(output_file, "w", encoding="utf-8") as f:
        for name, min_rev in results:
            f.write(f"{name}: {min_rev}\n")
        f.write("\n")
        f.write(f"m={max_val}")

    print(f"\nResults saved to '{output_file}'")
    print("\n" + "=" * 60)
    print("Computation finished!")

if __name__ == "__main__":
    main()
\end{lstlisting}

\end{document}